\newtheorem{theorem}{Theorem}[section]
\newtheorem{lemma}[theorem]{Lemma}
\newtheorem{proposition}[theorem]{Proposition}
\newtheorem{corollary}[theorem]{Corollary}
\newtheorem{exAux}[theorem]{Example}
\newtheorem{Def}[theorem]{Definition}
\newenvironment{definition}{\begin{Def} \rm}{\end{Def}}
\newtheorem{Note}[theorem]{Note}
\newenvironment{note}{\begin{Note} \rm}{\end{Note}}
\newtheorem{Problem}[theorem]{Problem}
\newtheorem{Rem}[theorem]{Remark}
\newtheorem{Not}[theorem]{Notation}
\newenvironment{notation}{\begin{Not} \rm}{\end{Not}}
\newtheorem{Conj}[theorem]{Conjecture}
\newenvironment{conjecture}{\begin{Conj}}{\end{Conj}}
\newtheorem{Ass}[theorem]{Assumption}
\newenvironment{proof}{\medskip\noindent{\bf Proof.\ }}{\qed\medskip}
\newenvironment{proofof}[1]{\medskip\noindent{\bf Proof  of {#1}.\ 
}}{\qed\medskip}
\newcommand{\qed}{\hfill\mbox{$\Box$\qquad\qquad}}
\renewcommand{\th}{\theta}
\renewcommand{\indent}{\hspace{6mm}}
\begin{document}
\thispagestyle{empty}

\begin{center}
\LARGE \bf
\noindent
Towards a classification of the tridiagonal pairs
\end{center}

\smallskip

\begin{center}
\Large
Kazumasa Nomura and 
Paul Terwilliger\footnote{This author gratefully acknowledges 
support from the FY2007 JSPS Invitation Fellowship Program
for Reseach in Japan (Long-Term), grant L-07512.}
\end{center}

\smallskip

\begin{quote}
\small 
\begin{center}
\bf Abstract
\end{center}

\indent
Let $\mathbb{K}$ denote a field and let $V$ denote a vector space
over $\mathbb{K}$ with finite positive dimension.
Let $\text{End}(V)$ denote the $\mathbb{K}$-algebra
consisting of all $\mathbb{K}$-linear transformations
from $V$ to $V$.
We consider a pair $A,A^* \in \text{End}(V)$ 
that satisfy (i)--(iv) below:
\begin{itemize}
\item[(i)]
Each of $A,A^*$ is diagonalizable.
\item[(ii)]
There exists an ordering $\{V_i\}_{i=0}^d$ of the eigenspaces of 
$A$ such that
$A^* V_i \subseteq V_{i-1} + V_{i} + V_{i+1}$ for $0 \leq i \leq d$,
where $V_{-1}=0$ and $V_{d+1}=0$.
\item[(iii)]
There exists an ordering $\{V^*_i\}_{i=0}^\delta$ 
of the eigenspaces of $A^*$ such that
$A V^*_i \subseteq V^*_{i-1} + V^*_{i} + V^*_{i+1}$ for $0 \leq i \leq \delta$,
where $V^*_{-1}=0$ and $V^*_{\delta+1}=0$.
\item[(iv)] 
There is no subspace $W$ of $V$ such that
$AW \subseteq W$, $A^* W \subseteq W$, $W \neq 0$, $W \neq V$.
\end{itemize}
We call such a pair a {\em tridiagonal pair} on $V$.
Let $E^*_0$ denote the element of $\text{End}(V)$
such that $(E^*_0-I)V^*_0=0$ and
$E^*_0V^*_i=0$ for $1 \leq i \leq d$.
Let $\cal D$ (resp. ${\cal D}^*$) denote the
$\mathbb{K}$-subalgebra of $\text{End}(V)$ generated by $A$ (resp. $A^*$).
In this paper we prove that
the span of
$E^*_0{\cal D}{\cal D}^*{\cal D}E^*_0$
equals the span of $E^*_0{\cal D}E^*_0{\cal D}E^*_0$,
and that the elements of $E^*_0{\cal D}E^*_0$ mutually commute.
We relate these results to some conjectures
of Tatsuro Ito and the second author that
are expected to play a role in the classification
of tridiagonal pairs.
\end{quote}

\section{Tridiagonal pairs}

\indent
Throughout the paper $\mathbb{K}$  denotes a field
and $V$ denotes a vector space over $\mathbb{K}$
with finite positive dimension.

\medskip

We begin by recalling the notion of a tridiagonal pair.
We will use the following terms.
Let $\text{End}(V)$ denote the $\mathbb{K}$-algebra
consisting of all $\mathbb{K}$-linear transformations
from $V$ to $V$.
For $A \in \text{End}(V)$ and for a subspace $W \subseteq V$,
we call $W$ an {\em eigenspace} of $A$ 
whenever $W \neq 0$ and there exists $\theta \in \mathbb{K}$
such that $W=\{v \in V \,|\, Av=\theta v\}$; 
in this case $\th$ is
the {\em eigenvalue} of $A$ associated with $W$.
We say $A$ is {\em diagonalizable} whenever $V$ is spanned by
the eigenspaces of $A$.

\medskip

\begin{definition} \cite{ITT}     \label{def:TDpair}   \samepage
By a {\em tridiagonal pair} on $V$ we mean an ordered pair
$A,A^* \in \text{End}(V)$ that satisfy (i)--(iv) below:
\begin{itemize}
\item[(i)] 
Each of $A,A^*$ is diagonalizable.
\item[(ii)] 
There exists an ordering $\{V_i\}_{i=0}^d$ of the eigenspaces of 
$A$ such that
\begin{equation}              \label{eq:Astrid}
  A^* V_i \subseteq V_{i-1} + V_{i} + V_{i+1}  \qquad\qquad (0 \leq i \leq d),
\end{equation}
 where $V_{-1}=0$ and $V_{d+1}=0$.
\item[(iii)] 
There exists an ordering $\{V^*_i\}_{i=0}^\delta$ 
of the eigenspaces of $A^*$ such that
\begin{equation}           \label{eq:Atrid}
A V^*_i \subseteq V^*_{i-1} + V^*_{i} + V^*_{i+1} \qquad\qquad
            (0 \leq i \leq \delta),
\end{equation}
where $V^*_{-1}=0$ and $V^*_{\delta+1}=0$.
\item[(iv)] 
There is no subspace $W$ of $V$ such that
$AW \subseteq W$, $A^* W \subseteq W$, $W \neq 0$, $W \neq V$.
\end{itemize}
\end{definition}

\begin{note}      \label{note:star}        \samepage
It is a common notational convention to use $A^*$ to represent the
conjugate-transpose of $A$. We are not using this convention.
In a tridiagonal pair $A,A^*$ the linear transformations $A$ and
$A^*$ are arbitrary subject to (i)--(iv) above.
\end{note}

\medskip

Let $A,A^*$ denote a tridiagonal pair on $V$, 
as in Definition \ref{def:TDpair}.
By \cite[Lemma 4.5]{ITT} the integers $d$ and $\delta$ from (ii), (iii) are equal;
we call this common value the {\em diameter} of the pair. 

\medskip

We refer the reader to
\cite{AC,AC2,AC3,Bas,ITT,IT:shape,IT:uqsl2hat,IT:Krawt,N:aw,N:refine,N:height1,
NT:tde,NT:sharp,Vidar} for background on tridiagonal pairs.
See 
\cite{BI,BT:Borel,BT:loop,Bow,Ca,CaMT,CaW,Egge,F:RL,H:tetra,HT:tetra,
IT:non-nilpotent,IT:tetra,
IT:inverting,IT:drg,IT:loop,ITW:equitable,Leo,Koe,Mik,Mik2,
P,R:multi,R:6j,Suz,T:sub1,T:sub3,
T:qSerre,T:Kac-Moody,Z}
for related topics.
The following special case has received a lot of attention. 
A tridiagonal pair $A,A^*$ is called a {\em Leonard pair} whenever
the eigenspaces for $A$ (resp. $A^*$) all have dimension $1$.
See 
\cite{Cur:mlt,Cur:spinLP,H,M:LT,NT:balanced,NT:formula,NT:det,NT:mu,
NT:span,NT:switch,NT:affine,NT:maps,T:Leonard,T:24points,T:canform,T:intro,
T:intro2,T:split,T:array,T:qRacah,T:survey,TV,V,V:AW}
for information about Leonard pairs.

\section{Tridiagonal systems}

\indent
When working with a tridiagonal pair, it is often convenient to consider
a closely related object called a tridiagonal system.
To define a tridiagonal system, we recall a few concepts from linear
algebra.
Let $A$ denote a diagonalizable element of $\text{End}(V)$.
Let $\{V_i\}_{i=0}^d$ denote an ordering of the eigenspaces of $A$
and let $\{\th_i\}_{i=0}^d$ denote the corresponding ordering of
the eigenvalues of $A$.
For $0 \leq i \leq d$ let $E_i:V \to V$ denote the linear transformation
such that $(E_i-I)V_i=0$ and $E_iV_j=0$ for $j \neq i$ $(0 \leq j \leq d)$.
Here $I$ denotes the identity of $\text{End}(V)$.
We call $E_i$ the {\em primitive idempotent} of $A$ corresponding to $V_i$
(or $\th_i$).
Observe that
(i) $\sum_{i=0}^d E_i = I$;
(ii) $E_iE_j=\delta_{i,j}E_i$ $(0 \leq i,j \leq d)$;
(iii) $V_i=E_iV$ $(0 \leq i \leq d)$;
(iv) $A=\sum_{i=0}^d \th_iE_i$.
Moreover
\begin{equation}         \label{eq:defEi}
  E_i=\prod_{\stackrel{0 \leq j \leq d}{j \neq i}}
          \frac{A-\th_jI}{\th_i-\th_j}.
\end{equation}
We note that each of $\{E_i\}_{i=0}^d$,
$\{A^i\}_{i=0}^d$ is a basis for the $\mathbb{K}$-subalgebra
of $\text{End}(V)$ generated by $A$.

Now let $A,A^*$ denote a tridiagonal pair on $V$.
An ordering of the eigenspaces of $A$ (resp. $A^*$)
is said to be {\em standard} whenever it satisfies 
\eqref{eq:Astrid} (resp. \eqref{eq:Atrid}). 
We comment on the uniqueness of the standard ordering.
Let $\{V_i\}_{i=0}^d$ denote a standard ordering of the eigenspaces of $A$.
Then the ordering $\{V_{d-i}\}_{i=0}^d$ is standard and no other ordering
is standard.
A similar result holds for the eigenspaces of $A^*$.
An ordering of the primitive idempotents of $A$ (resp. $A^*$)
is said to be {\em standard} whenever
the corresponding ordering of the eigenspaces of $A$ (resp. $A^*$)
is standard.

\medskip

\begin{definition}          \label{def:TDsystem}  \samepage
By a {\em tridiagonal system} on $V$ we mean a sequence
\[
  \Phi=(A;\{E_i\}_{i=0}^d;A^*;\{E^*_i\}_{i=0}^d)
\]
that satisfies (i)--(iii) below.
\begin{itemize}
\item[(i)]
$A,A^*$ is a tridiagonal pair on $V$.
\item[(ii)]
$\{E_i\}_{i=0}^d$ is a standard ordering
of the primitive idempotents of $A$.
\item[(iii)]
$\{E^*_i\}_{i=0}^d$ is a standard ordering
of the primitive idempotents of $A^*$.
\end{itemize}
\end{definition}

\medskip

We will use the following notation.

\medskip

\begin{notation}   \label{notation}   \samepage
let $\Phi=(A;\{E_i\}_{i=0}^d;A^*;\{E^*_i\}_{i=0}^d)$ denote
a tridiagonal system on $V$.
We denote by $\cal D$ (resp. ${\cal D}^*$)
the $\mathbb{K}$-subalgebra of $\text{End}(V)$ generated by
$A$ (resp. $A^*)$.
For $0 \leq i \leq d$ let $\th_i$ (resp. $\th^*_i$)
denote the eigenvalue of $A$ (resp. $A^*$)
associated with the eigenspace $E_iV$ (resp. $E^*_iV$).
We observe $\{\th_i\}_{i=0}^d$ (resp. $\{\th^*_i\}_{i=0}^d$) are
mutually distinct and contained in $\mathbb{K}$.
\end{notation}

\medskip

Referring to Notation \ref{notation},
it has been conjectured that $E^*_0V$ has dimension $1$, 
provided that $\mathbb{K}$ is algebraically closed \cite{IT:shape}.
A more recent and stronger conjecture is that
$E^*_0{\cal T}E^*_0$ is commutative, where $\cal T$ is 
the $\mathbb{K}$-subalgebra of $\text{End}(V)$ generated by 
${\cal D}$ and ${\cal D}^*$ \cite{NT:sharp}.
There is more detailed version of this conjecture which reads as follows:

\medskip

\begin{conjecture}  {\rm \cite[Conjecture 12.2]{NT:sharp}} \label{conj:EDDE}
  \samepage
With reference to Notation {\rm \ref{notation}}
the following {\rm (i)}, {\rm (ii)} hold.
\begin{itemize}
\item[\rm (i)]
$E^*_0{\cal T}E^*_0$ is generated by $E^*_0{\cal D}E^*_0$.
\item[\rm (ii)]
The elements of $E^*_0 {\cal D} E^*_0$ mutually commute.
\end{itemize}
\end{conjecture}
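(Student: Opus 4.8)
\medskip
\noindent
The plan is to grade $V$ by the eigenspaces of $A^*$, writing $V=\bigoplus_{i=0}^{d}E^*_iV$, and to decompose $A=L+F+R$ where $R=\sum_iE^*_{i+1}AE^*_i$, $F=\sum_iE^*_iAE^*_i$ and $L=\sum_iE^*_{i-1}AE^*_i$; condition \eqref{eq:Atrid} guarantees $E^*_jAE^*_i=0$ whenever $|i-j|>1$, so these are the only components of $A$. Since ${\cal D}^*=\sum_{i=0}^{d}\mathbb{K}E^*_i$ and $A^*E^*_0=\th^*_0E^*_0=E^*_0A^*$, a routine normalization shows that $E^*_0{\cal T}E^*_0$ is spanned by the words
\[
 E^*_0A^{n_1}E^*_{j_1}A^{n_2}\cdots E^*_{j_{r-1}}A^{n_r}E^*_0 \qquad (n_t\ge 0,\ 0\le j_t\le d),
\]
whereas $E^*_0{\cal D}E^*_0=\mathrm{span}\{E^*_0A^nE^*_0\}$ and the subalgebra it generates is the span of those same words in which every internal idempotent $E^*_{j_t}$ equals $E^*_0$. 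Thus part (i) is the assertion that, at the level of spans, each internal $E^*_{j_t}$ may be replaced by $E^*_0$, while part (ii) is the single family of identities $E^*_0A^mE^*_0A^nE^*_0=E^*_0A^nE^*_0A^mE^*_0$.

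For (ii) I would bring in the nondegenerate symmetric bilinear form on $V$ for which $A$ and $A^*$ are self-adjoint, a standard feature of a tridiagonal pair that does not presuppose the conjecture. This form supplies an antiautomorphism $\dagger$ of ${\cal T}$ fixing $A$ and $A^*$, hence fixing $E^*_0$ and every power of $A$; so each $E^*_0A^nE^*_0$ is $\dagger$-invariant and $(E^*_0A^mE^*_0A^nE^*_0)^\dagger=E^*_0A^nE^*_0A^mE^*_0$. Hence (ii) is equivalent to the claim that each product $E^*_0A^mE^*_0A^nE^*_0$ is itself $\dagger$-invariant. To force this I would restrict to $W=E^*_0V$, set $a_n:=E^*_0A^nE^*_0|_W$, compute the $a_n$ from the bidiagonal action of $A$ on the split decomposition $V=\bigoplus_iU_i$ (for which $U_0=E^*_0V$), and feed in the tridiagonal relations satisfied by $A$ and $A^*$ to obtain a recursion placing all $a_n$ in one commutative subalgebra of $\mathrm{End}(W)$; the self-adjointness of each $a_n$ then upgrades $(a_ma_n)^\dagger=a_na_m$ to $a_ma_n=a_na_m$. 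I expect this part to go through unconditionally.

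For (i) the natural route is induction on the number $r-1$ of internal idempotents, collapsing them to $E^*_0$ from the inside. The base case $r=2$ is the span equality $\mathrm{span}(E^*_0{\cal D}{\cal D}^*{\cal D}E^*_0)=\mathrm{span}(E^*_0{\cal D}E^*_0{\cal D}E^*_0)$, which says precisely that a single internal idempotent flanked on both sides by $E^*_0$ may be replaced by $E^*_0$; I would prove it by expanding $A^aE^*_jA^c$ in the components $L,F,R$ and resumming the resulting grade-$0\to0$ paths, the outer $E^*_0$'s being exactly what makes the bookkeeping close. The inductive step is where I expect the real difficulty: for $r\ge 3$ one must collapse an internal idempotent that is \emph{not} flanked by two $E^*_0$'s, and the base-case argument has no valid one-sided analogue, so it cannot simply be iterated. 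That this is the crux is transparent over an algebraically closed $\mathbb{K}$: condition (iv) makes $V$ an irreducible ${\cal T}$-module, so Burnside gives ${\cal T}=\mathrm{End}(V)$ and $E^*_0{\cal T}E^*_0\cong\mathrm{End}(E^*_0V)$, whereupon (i) together with (ii) would force this full matrix algebra to be commutative, i.e.\ $\dim E^*_0V=1$; conversely $\dim E^*_0V=1$ renders both parts trivial. Thus the inductive collapse is tantamount to the sharpness statement $\dim E^*_0V=1$, and I regard establishing it, rather than the base case or part (ii), as the essential obstacle.
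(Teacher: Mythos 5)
You are reviewing a statement that the paper itself does not prove: it is a conjecture (from \cite{NT:sharp}), and the paper establishes only part (ii) of it, plus the one-internal-idempotent case of part (i), namely Theorem \ref{thm:main}(i) (the span of $E^*_0{\cal D}{\cal D}^*{\cal D}E^*_0$ equals the span of $E^*_0{\cal D}E^*_0{\cal D}E^*_0$). Your structural reduction is accurate: part (i) amounts to collapsing every internal idempotent $E^*_{j_t}$ to $E^*_0$, the $r=2$ case is exactly Theorem \ref{thm:main}(i), and your Burnside observation --- that over an algebraically closed field, (i) together with (ii) is equivalent to $\dim E^*_0V=1$ --- is correct and is indeed why the full conjecture is deep (it was eventually settled via the classification of sharp tridiagonal pairs). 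So your honest refusal to claim the inductive step is consistent with the state of the art, and your expectation that part (ii) holds unconditionally is right. The problem is the route you propose for part (ii).

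The genuine gap is your premise that a nondegenerate symmetric bilinear form on $V$ with $A$ and $A^*$ self-adjoint is ``a standard feature of a tridiagonal pair that does not presuppose the conjecture.'' It is not. Such a form is equivalent to an antiautomorphism $\dagger$ of $\text{End}(V)$ fixing $A$ and $A^*$, i.e.\ to an isomorphism between the pair $A,A^*$ on $V$ and the transposed pair on the dual space; this is a theorem for Leonard pairs, but for general tridiagonal pairs its existence was an open problem at the time of this paper, and it was obtained only afterwards as a corollary of the classification of sharp tridiagonal pairs --- a development that itself relies on the commutativity theorem you are trying to prove. So the step is circular, or at best invokes machinery that is unavailable. (Even granting $\dagger$, your argument is incomplete: as you note, $\dagger$ only gives $(E^*_0A^mE^*_0A^nE^*_0)^\dagger=E^*_0A^nE^*_0A^mE^*_0$, so commutativity requires each such product to be $\dagger$-fixed; the ``recursion placing all $a_n$ in one commutative subalgebra'' is asserted rather than proved, and if you had it, the $\dagger$-symmetry would be superfluous.) The paper's proof shows exactly how to get the symmetry you want without any antiautomorphism of the operator algebra: it works upstairs in ${\cal D}\otimes{\cal D}^*\otimes{\cal D}$, where the transpose map $\ddagger(X\otimes Y\otimes Z)=Z\otimes Y\otimes X$ of Definition \ref{def:dd} exists for free, and the substantive work (Sections 6--10: the subspace $R$ of Definition \ref{def:R}, Lemma \ref{lem:kernel}, and the explicit bases culminating in Theorem \ref{thm:dd}) is to show that the image of $1-\ddagger$ lies in $R\subseteq\Ker\pi$; applying $\pi$ then yields $E^*_0XE^*_0YE^*_0=E^*_0YE^*_0XE^*_0$ for all $X,Y\in{\cal D}$. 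The same apparatus, via Theorem \ref{thm:basis3}, proves the $r=2$ case of part (i); your proposed $L,F,R$ path-resummation for that case is only a sketch, and nothing in it substitutes for the basis/complement analysis of $R$ that makes the paper's bookkeeping actually close.
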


\medskip

The following special case of Conjecture \ref{conj:EDDE} 
has been proven. 
Referring to Notation \ref{notation}, there is a well known parameter $q$ 
associated with $A,A^*$ that is used to describe the eigenvalues 
\cite{ITT,T:qSerre}.
In \cite{IT:aug}, Conjecture \ref{conj:EDDE}
is proven assuming $q$ is not a root of unity
and $\mathbb{K}$ is algebraically closed. 
In this paper we use a different approach to 
prove part (ii) of Conjecture \ref{conj:EDDE}
without any extra assumptions. 
We also obtain a result which sheds some light on
why part (i) of the conjecture should be true without
any extra assumptions. 
We now state our main theorem.

\medskip

\begin{theorem}   \label{thm:main}  \samepage
With reference to Notation {\rm \ref{notation}}
the following {\rm (i)}, {\rm (ii)} hold.
\begin{itemize}
\item[\rm (i)]
The span of $E^*_0{\cal D}{\cal D}^*{\cal D}E^*_0$
is equal to the span of $E^*_0{\cal D}E^*_0{\cal D}E^*_0$.
\item[\rm (ii)]
The elements of $E^*_0{\cal D}E^*_0$ mutually commute.
\end{itemize}
\end{theorem}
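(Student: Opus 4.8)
The plan is to establish part (i) at the level of the idempotent bases and then deduce part (ii) from it. Recall that $\{E_i\}_{i=0}^d$ is a basis for $\mathcal{D}$, that $\{E^*_j\}_{j=0}^d$ is a basis for $\mathcal{D}^*$, and that each $E^*_j$ lies in $\mathcal{D}^*$. Consequently the span of $E^*_0\mathcal{D}\mathcal{D}^*\mathcal{D}E^*_0$ is spanned by the products $E^*_0 E_a E^*_b E_c E^*_0$ $(0\le a,b,c\le d)$, while the span of $E^*_0\mathcal{D}E^*_0\mathcal{D}E^*_0$ is spanned by the products $E^*_0 E_a E^*_0 E_c E^*_0$. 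Two elementary tools would be used throughout. First, parts (ii),(iii) of Definition \ref{def:TDpair} give the banded relations $E^*_i A E^*_j=0$ and $E_i A^* E_j=0$ whenever $|i-j|\ge 2$, whence $E^*_i A^r E^*_j=0$ whenever $|i-j|>r$ and $E_i (A^*)^s E_j=0$ whenever $|i-j|>s$. Second, since the $\{\theta^*_j\}$ are mutually distinct, the change of basis between $\{(A^*)^s\}_{s=0}^d$ and $\{E^*_j\}_{j=0}^d$ is an invertible Vandermonde, so one may freely pass between these two descriptions of the middle $\mathcal{D}^*$-factor.

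For part (i), the inclusion $E^*_0\mathcal{D}E^*_0\mathcal{D}E^*_0\subseteq E^*_0\mathcal{D}\mathcal{D}^*\mathcal{D}E^*_0$ (at the level of spans) is immediate because $E^*_0\in\mathcal{D}^*$. The substance is the reverse inclusion, for which it suffices to show, for every $b$, that $E^*_0 E_a E^*_b E_c E^*_0$ lies in the span of the products whose middle idempotent is $E^*_0$. I would prove this by induction on $b$, the case $b=0$ being vacuous. For the inductive step the plan is to lower the middle grade by inserting $A^*$ and using $E^*_0 A^*=\theta^*_0 E^*_0$ together with the commutator of $A^*$ against the polynomial factors in $\mathcal{D}$; the banded relations confine all grade shifts to $\pm 1$, so that the $E^*_b$-sandwiched term is rewritten in terms of $E^*_{b-1}$- and $E^*_b$-sandwiched terms of strictly smaller total degree in the outer $\mathcal{D}$-factors, closing the induction. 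This grade-lowering reduction is the main obstacle: the commutator bookkeeping must be organized so that each step genuinely decreases an induction parameter while keeping the product inside the target span.

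For part (ii), the cleanest conceptual route is via a reversal antiautomorphism $\dagger$ of $\text{End}(V)$ fixing each of $A,A^*$ (hence each $E_i$ and $E^*_j$) and reversing products. Every element of $E^*_0\mathcal{D}E^*_0$ is then $\dagger$-fixed, and for $X,Y\in E^*_0\mathcal{D}E^*_0$ one has $(XY)^\dagger=YX$; thus the asserted commutativity is equivalent to the symmetry $E^*_0 A^m E^*_0 A^n E^*_0=E^*_0 A^n E^*_0 A^m E^*_0$, i.e. to $\dagger$ acting as the identity on the span of $E^*_0\mathcal{D}E^*_0\mathcal{D}E^*_0$. By part (i) this span equals that of $E^*_0\mathcal{D}\mathcal{D}^*\mathcal{D}E^*_0$, on which $\dagger$ carries the generator $E^*_0 A^r(A^*)^s A^t E^*_0$ to $E^*_0 A^t(A^*)^s A^r E^*_0$. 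I would therefore upgrade the reduction of part (i) to a \emph{reversal-symmetric} normal form, invariant under the simultaneous interchange $r\leftrightarrow t$, so that each generator equals its $\dagger$-image and the commutativity of $E^*_0\mathcal{D}E^*_0$ follows at once. The only work beyond part (i) is to verify that the grade-lowering induction can be run compatibly with reversal, which is bookkeeping once (i) is in hand; should the antiautomorphism not be available a priori, the same reversal-symmetric induction proves the symmetry $E^*_0 A^m E^*_0 A^n E^*_0=E^*_0 A^n E^*_0 A^m E^*_0$ directly.
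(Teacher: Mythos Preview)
Your proposal has two genuine gaps. For part (ii), the reversal antiautomorphism $\dagger$ of $\text{End}(V)$ fixing $A$ and $A^*$ need not exist: such a $\dagger$ amounts to a nondegenerate bilinear form on $V$ with respect to which both $A$ and $A^*$ are symmetric, and the existence of such a form for an arbitrary tridiagonal pair is not known---it is essentially tied to the sharpness conjecture ($\dim E^*_0V=1$) that Theorem~\ref{thm:main} is meant to illuminate. So you cannot assume it, and your fallback (``the same reversal-symmetric induction'') rests entirely on the unproven mechanism from part (i). For part (i), the grade-lowering induction is a hope rather than an argument: you yourself flag that organizing the commutator bookkeeping so that some parameter strictly decreases is ``the main obstacle,'' and you do not resolve it. Inserting $A^*$ and using $E_aA^*=E_aA^*(E_{a-1}+E_a+E_{a+1})$ typically produces terms of the \emph{same} middle grade with shifted outer indices, so the naive induction does not close.

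The paper sidesteps both problems by lifting everything to the tensor product $\mathcal{D}\otimes\mathcal{D}^*\otimes\mathcal{D}$, with the multiplication map $\pi:X\otimes Y\otimes Z\mapsto E^*_0XYZE^*_0$. An explicit subspace $R\subseteq\ker\pi$ is built from the banded relations (Definition~\ref{def:R}), and a sequence of basis computations (Theorems~\ref{thm:basis1}, \ref{thm:basis2}, \ref{thm:basis3}) shows that a complement for $R$ can be chosen inside $\mathcal{D}\otimes E^*_0\otimes\mathcal{D}$; applying $\pi$ gives (i). For (ii), the reversal you want is realized as the involution $\ddagger:X\otimes Y\otimes Z\mapsto Z\otimes Y\otimes X$ on the \emph{tensor product}---where it obviously exists---and Theorem~\ref{thm:dd} shows that $(1-\ddagger)$ maps into $R\subseteq\ker\pi$; applying $\pi$ then yields $E^*_0XE^*_0YE^*_0=E^*_0YE^*_0XE^*_0$. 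The moral is that your antiautomorphism lives on $\mathcal{D}\otimes\mathcal{D}^*\otimes\mathcal{D}$, not on $\text{End}(V)$, and the real content is the combinatorial analysis of $R$ that replaces your unspecified induction.
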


\medskip

Our proof of Theorem \ref{thm:main} appears in Section 11.
In Sections 3--10 we obtain some results that will
be used in the proof. Our point of departure
is the following observation.

\medskip

\begin{lemma}    \label{lem:supertrid}
With reference to Notation {\rm \ref{notation}}
the following {\rm (i)}, {\rm (ii)} hold
for $0 \leq i,j,k \leq d$.
\begin{itemize}
\item[\rm (i)]
$E^*_iA^kE^*_j=0$ if $k<|i-j|$. 
\item[\rm (ii)]
$E_i{A^*}^kE_j=0$ if $k<|i-j|$.
\end{itemize}
\end{lemma}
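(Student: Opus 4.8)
The plan is to convert the tridiagonal inclusions of Definition~\ref{def:TDpair} into equations among the primitive idempotents and then expand the power $A^k$ by inserting copies of the resolution of the identity. First I would record the structural fact that drives everything. Since $E^*_jV=V^*_j$ and, by \eqref{eq:Atrid}, $AV^*_j\subseteq V^*_{j-1}+V^*_j+V^*_{j+1}$, applying the projection $E^*_m$ annihilates any component lying outside $V^*_m$, so
\begin{equation*}
  E^*_m A E^*_j = 0 \qquad (|m-j|>1).
\end{equation*}
In other words $A$ is tridiagonal with respect to the decomposition of $V$ into the eigenspaces of $A^*$; this single input suffices for part (i).

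Next I would use $\sum_{m=0}^{d}E^*_m=I$ to insert a resolution of the identity in each gap between consecutive copies of $A$, obtaining
\begin{equation*}
  E^*_i A^k E^*_j
  = \sum_{\substack{m_0,\dots,m_k\\ m_0=i,\ m_k=j}}
     E^*_{m_0} A E^*_{m_1} A E^*_{m_2}\cdots E^*_{m_{k-1}} A E^*_{m_k},
\end{equation*}
where each interior index $m_\ell$ runs over $0,\dots,d$. By the displayed relation, a term vanishes unless $|m_\ell-m_{\ell+1}|\le 1$ for every $\ell$. Thus in any surviving term the index changes by at most $1$ at each of the $k$ steps along the path $i=m_0,m_1,\dots,m_k=j$, so the triangle inequality gives $|i-j|\le k$. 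Hence if $k<|i-j|$ every term in the sum is zero, and $E^*_i A^k E^*_j=0$. This proves part (i).

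Finally, part (ii) follows by the identical argument with the roles of $A$ and $A^*$ interchanged: using \eqref{eq:Astrid} in place of \eqref{eq:Atrid} one gets $E_m A^* E_j=0$ whenever $|m-j|>1$, and then inserting $\sum_{m=0}^{d}E_m=I$ between consecutive copies of $A^*$ yields the same path-counting conclusion for ${A^*}^k$.

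I expect no serious obstacle here; the only point requiring care is the bookkeeping in the telescoping sum, namely pinning the endpoints to $i$ and $j$ and arranging that consecutive projectors are separated by exactly one copy of $A$. Once the tridiagonal relation on the idempotents is in hand, the statement reduces to the elementary observation that a walk of length $k$ on $\{0,\dots,d\}$ with unit steps cannot connect indices that differ by more than $k$.
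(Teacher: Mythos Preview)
Your argument is correct and is precisely the routine computation the paper alludes to: from \eqref{eq:Atrid} one gets $E^*_mAE^*_j=0$ for $|m-j|>1$, and inserting $\sum_m E^*_m=I$ between the $k$ factors of $A$ reduces the claim to the fact that a nearest-neighbor walk of length $k$ cannot bridge indices differing by more than $k$. This is exactly what the paper means by ``routinely obtained using lines \eqref{eq:Astrid}, \eqref{eq:Atrid} and Definition~\ref{def:TDsystem}.''
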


\begin{proof}
Routinely obtained using lines \eqref{eq:Astrid}, \eqref{eq:Atrid}
and Definition \ref{def:TDsystem}.
\end{proof}

\section{The $D_4$ action}

\indent
Let $\Phi=(A; \{E_i\}_{i=0}^d; A^*; \{E^*_i\}_{i=0}^d)$
denote a tridigonal system on $V$.
Then each of the following is a tridiagonal system on $V$:
\begin{align*}
\Phi^{*}  &:= 
       (A^*; \{E^*_i\}_{i=0}^d; A; \{E_i\}_{i=0}^d), \\
\Phi^{\downarrow} &:=
       (A; \{E_i\}_{i=0}^d; A^*; \{E^*_{d-i}\}_{i=0}^d), \\
\Phi^{\Downarrow} &:=
       (A; \{E_{d-i}\}_{i=0}^d; A^*; \{E^*_{i}\}_{i=0}^d).
\end{align*}
Viewing $*$, $\downarrow$, $\Downarrow$ as permutations on the set of
all tridiagonal systems,
\begin{equation}    \label{eq:relation1}
\text{$*^2$$\;=\;$$\downarrow^2$$\;=\;$$\Downarrow^2$$\;=\;$$1$},
\end{equation}
\begin{equation}    \label{eq:relation2}
\text{$\Downarrow$$*$$\;=\;$$*$$\downarrow$}, \quad
\text{$\downarrow$$*$$\;=\;$$*$$\Downarrow$}, \quad
\text{$\downarrow$$\Downarrow$$\;=\;$$\Downarrow$$\downarrow$}.
\end{equation}
The group generated by symbols $*$, $\downarrow$, $\Downarrow$ subject
to the relations (\ref{eq:relation1}), (\ref{eq:relation2}) is the
dihedral group $D_4$. We recall that $D_4$ is the group of symmetries of a
square, and has $8$ elements.
Apparently $*$, $\downarrow$, $\Downarrow$ induce an action of $D_4$
on the set of all tridiagonal systems.
Two tridiagonal systems will be called {\em relatives} whenever they are
in the same orbit of this $D_4$ action. 
The relatives of $\Phi$ are as follows:

\medskip
\noindent
\begin{center}
\begin{tabular}{c|c}
name  &  relative \\
\hline
$\Phi$ & 
       $(A; \{E_i\}_{i=0}^d; A^*;  \{E^*_i\}_{i=0}^d)$ \\ 
$\Phi^{\downarrow}$ &
       $(A; \{E_i\}_{i=0}^d; A^*;  \{E^*_{d-i}\}_{i=0}^d)$ \\ 
$\Phi^{\Downarrow}$ &
       $(A; \{E_{d-i}\}_{i=0}^d; A^*;  \{E^*_i\}_{i=0}^d)$ \\ 
$\Phi^{\downarrow \Downarrow}$ &
       $(A; \{E_{d-i}\}_{i=0}^d; A^*;  \{E^*_{d-i}\}_{i=0}^d)$ \\ 
$\Phi^{*}$  & 
       $(A^*; \{E^*_i\}_{i=0}^d; A;  \{E_i\}_{i=0}^d)$ \\ 
$\Phi^{\downarrow *}$ &
       $(A^*; \{E^*_{d-i}\}_{i=0}^d; A;  \{E_i\}_{i=0}^d)$ \\ 
$\Phi^{\Downarrow *}$ &
       $(A^*; \{E^*_i\}_{i=0}^d; A;  \{E_{d-i}\}_{i=0}^d)$ \\ 
$\Phi^{\downarrow \Downarrow *}$ &
       $(A^*; \{E^*_{d-i}\}_{i=0}^d; A;  \{E_{d-i}\}_{i=0}^d)$
\end{tabular}
\end{center}

\section{Some polynomials}

\indent
Let $\lambda$ denote an indeterminate and 
let $\mathbb{K}[\lambda]$ denote the $\mathbb{K}$-algebra
consisting of all polynomials in $\lambda$ that have
coefficients in $\mathbb{K}$.

\medskip

\begin{definition}   \label{def:tau}
With reference to Notation \ref{notation},
for $0 \leq i \leq d$ we define the following polynomials
in $\mathbb{K}[\lambda]$:
\begin{align}
 \tau_i
 &= (\lambda-\th_0)(\lambda-\th_1)\cdots(\lambda-\th_{i-1}),
                                                    \label{eq:deftau}  \\
 \eta_i
 &= (\lambda-\th_d)(\lambda-\th_{d-1})\cdots(\lambda-\th_{d-i+1}),
                                                    \label{eq:defeta}  \\
 \tau^*_i
  &= (\lambda-\th^*_0)(\lambda-\th^*_1)\cdots(\lambda-\th^*_{i-1}),
                                                    \label{eq:deftaus}  \\
 \eta^*_i
  &= (\lambda-\th^*_d)(\lambda-\th^*_{d-1})\cdots(\lambda-\th^*_{d-i+1}).
                                                    \label{eq:defetas}
\end{align}
Note that each of $\tau_i,\eta_i,\tau^*_i,\eta^*_i$ is monic with degree $i$.
\end{definition}

\medskip

The following lemmas show the significance of these polynomials.
We will focus on $\tau_i,\eta_i$;
of course similar results hold for $\tau^*_i,\eta^*_i$.

\medskip

\begin{lemma}   \label{lem:basistauiA}   \samepage
With reference to Notation {\rm \ref{notation}},
each of $\{\tau_i(A)\}_{i=0}^d$, $\{\eta_i(A)\}_{i=0}^d$
form a basis for $\cal D$.
\end{lemma}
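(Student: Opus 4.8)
The plan is to identify ${\cal D}$ with a space of polynomials and then argue by degrees. Recall from Section~2 that $\{A^i\}_{i=0}^d$ is a basis for ${\cal D}$, so ${\cal D}$ has dimension $d+1$ over $\mathbb{K}$. I would consider the evaluation map $\mu:\mathbb{K}[\lambda]\to{\cal D}$ sending $p\mapsto p(A)$. This is a surjective $\mathbb{K}$-algebra homomorphism whose kernel is the ideal generated by the minimal polynomial of $A$. Since $A$ is diagonalizable with the $d+1$ mutually distinct eigenvalues $\{\th_i\}_{i=0}^d$ (Notation~\ref{notation}), this minimal polynomial is $\prod_{i=0}^d(\lambda-\th_i)$, which is monic of degree $d+1$. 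Therefore $\mu$ restricts to a $\mathbb{K}$-linear isomorphism from the space $\mathbb{K}[\lambda]_{\le d}$ of polynomials of degree at most $d$ onto ${\cal D}$.

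Granting this, it suffices to show that each of $\{\tau_i\}_{i=0}^d$ and $\{\eta_i\}_{i=0}^d$ is a basis for $\mathbb{K}[\lambda]_{\le d}$; applying the isomorphism $\mu$ then yields the claim for $\{\tau_i(A)\}_{i=0}^d$ and $\{\eta_i(A)\}_{i=0}^d$. To this end I would invoke Definition~\ref{def:tau}, which records that $\tau_i$ is monic of degree exactly $i$ for $0\le i\le d$. A finite family of nonzero polynomials having mutually distinct degrees is linearly independent: in any vanishing linear combination, the surviving term of largest degree supplies a monomial that no other term can cancel. Since $\{\tau_i\}_{i=0}^d$ consists of $d+1$ polynomials of the distinct degrees $0,1,\dots,d$, it is linearly independent, and as $\dim_{\mathbb{K}}\mathbb{K}[\lambda]_{\le d}=d+1$ it is in fact a basis. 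The identical argument, using \eqref{eq:defeta}, applies to $\{\eta_i\}_{i=0}^d$.

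There is no serious obstacle here; the only points meriting care are the two standard facts I have isolated, namely that the minimal polynomial of the diagonalizable operator $A$ is the product of the factors $\lambda-\th_i$ over its distinct eigenvalues (so that $\mu$ has the stated kernel and $\mathbb{K}[\lambda]_{\le d}\cong{\cal D}$), and that polynomials of pairwise distinct degrees are linearly independent. Both are elementary, so the lemma follows at once. I note that the same reasoning, applied to $A^*$ in place of $A$, gives the analogous statement that each of $\{\tau^*_i(A^*)\}_{i=0}^d$ and $\{\eta^*_i(A^*)\}_{i=0}^d$ is a basis for ${\cal D}^*$.
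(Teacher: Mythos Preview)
Your proof is correct and follows essentially the same approach as the paper: the paper's one-line argument simply notes that $\{A^i\}_{i=0}^d$ is a basis for ${\cal D}$ and that each of $\tau_i,\eta_i$ has degree $i$, which is exactly the degree argument you carry out in more detail via the isomorphism $\mathbb{K}[\lambda]_{\le d}\cong{\cal D}$.
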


\begin{proof}
The sequence $\{A^i\}_{i=0}^d$ is a basis for $\cal D$ and
each of $\tau_i, \eta_i$ has degree $i$ for $0 \leq i \leq d$.
\end{proof}

\begin{lemma}   \label{lem:tauiAEi}  \samepage
With reference to Notation {\rm \ref{notation}},
for $0 \leq i \leq d$ we have
\begin{align}              
  \tau_i(A) &= \sum_{j=i}^d \tau_i(\th_j)E_j,
& E_i &= \sum_{j=i}^d
          \frac{\eta_{d-j}(\th_i)\tau_j(A)}
               {\tau_i(\th_i)\eta_{d-i}(\th_i)},  \label{eq:tauiA}  \\
  \eta_i(A) &= \sum_{j=0}^{d-i} \eta_i(\th_{j}) E_{j},
& E_i &= \sum_{j=d-i}^{d}
             \frac{\tau_{d-j}(\th_{i})\eta_{j}(A)}
                  {\tau_i(\th_i)\eta_{d-i}(\th_i)}.  \label{eq:etaiA}
\end{align}
\end{lemma}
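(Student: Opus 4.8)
The plan is to treat the two equations in the left column (expanding $\tau_i(A)$ and $\eta_i(A)$ in the idempotent basis) as essentially immediate, and to regard the two equations in the right column (recovering each $E_i$ from the $\tau_j(A)$ or the $\eta_j(A)$) as the real content, obtained by inverting a triangular change of basis. First I would record the spectral fact that for any polynomial $p \in \mathbb{K}[\lambda]$ one has $p(A)=\sum_{j=0}^d p(\th_j)E_j$; this follows from $A=\sum_j \th_j E_j$ together with $E_jE_k=\delta_{j,k}E_j$ and $\sum_j E_j=I$. Applying this to $p=\tau_i$ and using that $\th_j$ is a root of $\tau_i$ for $0\le j\le i-1$ collapses the sum to $\sum_{j=i}^d \tau_i(\th_j)E_j$, which is the first equation; the third equation follows identically from $\eta_i(\th_j)=0$ for $d-i+1\le j\le d$.

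For the right column, my point of departure would be the Lagrange form \eqref{eq:defEi}, which I would rewrite as
\[
  E_i=\frac{\tau_i(A)\,\eta_{d-i}(A)}{\tau_i(\th_i)\,\eta_{d-i}(\th_i)},
\]
using $\prod_{j\neq i}(\lambda-\th_j)=\tau_i(\lambda)\eta_{d-i}(\lambda)$ and $\prod_{j\neq i}(\th_i-\th_j)=\tau_i(\th_i)\eta_{d-i}(\th_i)$. The second equation of the lemma then follows once I establish the polynomial identity
\[
  \tau_i(\lambda)\,\eta_{d-i}(\lambda)=\sum_{j=i}^d \eta_{d-j}(\th_i)\,\tau_j(\lambda),
\]
since evaluating both sides at $A$ and dividing by the scalar $\tau_i(\th_i)\eta_{d-i}(\th_i)$ gives exactly the claimed expansion. (One could instead substitute the first equation into the right-hand side and match the coefficient of each $E_k$, but this reduces to the same identity.)

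I expect the proof of that polynomial identity to be the main obstacle, and I would prove it by telescoping. The key step is the one-line factorization
\[
  (\lambda-\th_i)\,\eta_{d-j}(\th_i)\,\tau_j(\lambda)=\tau_{j+1}(\lambda)\,\eta_{d-j}(\th_i)-\tau_j(\lambda)\,\eta_{d-j+1}(\th_i),
\]
which one checks from $\tau_{j+1}(\lambda)=(\lambda-\th_j)\tau_j(\lambda)$ and $\eta_{d-j+1}(\th_i)=(\th_i-\th_j)\eta_{d-j}(\th_i)$. Setting $u_j=\tau_j(\lambda)\,\eta_{d-j+1}(\th_i)$, the right-hand side is $u_{j+1}-u_j$, so the sum over $i\le j\le d$ telescopes to $u_{d+1}-u_i$. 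Here $u_{d+1}=\tau_{d+1}(\lambda)=\prod_{m=0}^d(\lambda-\th_m)$, while $u_i=0$ because $\eta_{d-i+1}(\th_i)$ carries the factor $(\th_i-\th_i)$. Hence $(\lambda-\th_i)\sum_{j=i}^d \eta_{d-j}(\th_i)\tau_j(\lambda)=(\lambda-\th_i)\prod_{m\neq i}(\lambda-\th_m)$, and cancelling the factor $\lambda-\th_i$ in $\mathbb{K}[\lambda]$ yields the identity.

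Finally, the fourth equation is the mirror image of the second under reversing the eigenvalue order $\th_j\mapsto\th_{d-j}$, that is, under passing to the relative $\Phi^{\Downarrow}$; this interchanges the roles of $\tau_i$ and $\eta_i$ and sends $E_i\mapsto E_{d-i}$. I would therefore obtain it by applying the already-proved second equation to $\Phi^{\Downarrow}$ and reindexing, so that only the $\tau$–$\eta$ telescoping computation is carried out once.
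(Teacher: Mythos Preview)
Your proposal is correct and follows essentially the same route as the paper: the left-column equations come from the spectral expansion $p(A)=\sum_j p(\th_j)E_j$, the right-column equations from rewriting \eqref{eq:defEi} as $E_i=\tau_i(A)\eta_{d-i}(A)/\tau_i(\th_i)\eta_{d-i}(\th_i)$ together with the identity $\tau_i\eta_{d-i}=\sum_{j=i}^d \eta_{d-j}(\th_i)\tau_j$, and \eqref{eq:etaiA} is obtained from \eqref{eq:tauiA} by passing to $\Phi^{\Downarrow}$. The only difference is that where the paper cites \cite[Lemma 5.4]{NT:mu} or ``routine induction on $d$'' for that polynomial identity, you supply an explicit telescoping proof, which is a perfectly good way to fill in that step.
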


\begin{proof}
To get the equation on the left in \eqref{eq:tauiA}, observe that
\[
  \tau_i(A) = \sum_{j=0}^d \tau_i(A)E_j 
            = \sum_{j=0}^d \tau_i(\th_j)E_j
\]
and $\tau_i(\th_j)= 0$ for $0 \leq j \leq i-1$.
Concerning the equation on the right in \eqref{eq:tauiA}, 
first observe by \eqref{eq:defEi} that
\begin{equation}    \label{eq:Eiaux}
 E_i = \frac{\tau_i(A)\eta_{d-i}(A)}
            {\tau_i(\th_i)\eta_{d-i}(\th_i)}.
\end{equation}
By \cite[Lemma 5.4]{NT:mu} or a routine induction on $d$ we find
\[
  \eta_{d-i} = \sum_{j=i}^d \eta_{d-j}(\th_i)\tau^{-1}_i \tau_j.
\]
Therefore
\begin{equation}   \label{eq:etaaux}
 \tau_i \eta_{d-i} = \sum_{j=i}^d \eta_{d-j}(\th_i) \tau_j.
\end{equation}
Evaluating the right-hand side of \eqref{eq:Eiaux} using
\eqref{eq:etaaux} we obtain the equation on the right in \eqref{eq:tauiA}.
We have now obtained \eqref{eq:tauiA}.
Applying \eqref{eq:tauiA} to $\Phi^{\Downarrow}$ we obtain \eqref{eq:etaiA}.
\end{proof}

\begin{lemma}   \label{lem:taubasis}   \samepage
With reference to Notation {\rm \ref{notation}} the following
{\rm (i)}--{\rm (iii)} hold for $0 \leq i \leq d$.
\begin{itemize}
\item[\rm (i)]
  $\text{\rm Span}\{A^h \,|\, 0 \leq h \leq i\} 
    = \text{\rm Span}\{\tau_h(A) \,|\, 0 \leq h \leq i\}$.
\item[\rm (ii)]
  $\text{\rm Span}\{E_h \,|\, i \leq h \leq d\} 
  = \text{\rm Span}\{\tau_h(A) \,|\, i \leq h \leq d\}$.
\item[\rm (iii)] 
 $\tau_i(A)$ is a basis for 
  $ \text{\rm Span}\{A^h \,|\, 0 \leq h \leq i\} 
       \cap \text{\rm Span}\{E_h \,|\, i \leq h \leq d\}$.
\end{itemize}
\end{lemma}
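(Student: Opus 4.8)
The plan is to place everything inside ${\cal D}$ and work relative to the basis $\{\tau_h(A)\}_{h=0}^d$ furnished by Lemma \ref{lem:basistauiA}, so that the two spans appearing in parts (i)--(iii) become \emph{coordinate subspaces} with respect to a single common basis. Once that is done, part (iii) reduces to intersecting index sets. I would handle the three parts in the stated order, since (iii) consumes (i) and (ii).

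For part (i), I would argue by a triangular change of basis. By Definition \ref{def:tau} each $\tau_h$ is monic of degree $h$, so $\tau_h(A) \in \text{\rm Span}\{A^\ell \,|\, 0 \leq \ell \leq h\} \subseteq \text{\rm Span}\{A^\ell \,|\, 0 \leq \ell \leq i\}$ for $0 \leq h \leq i$, giving one inclusion. Conversely, the $i+1$ elements $\tau_0(A),\ldots,\tau_i(A)$ are monic polynomials in $A$ of the distinct degrees $0,1,\ldots,i$, so the transition matrix expressing them in terms of $1,A,\ldots,A^i$ is unipotent triangular and hence invertible; inverting it writes each $A^h$ $(0 \leq h \leq i)$ as a $\mathbb{K}$-linear combination of $\tau_0(A),\ldots,\tau_i(A)$, yielding the reverse inclusion.

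For part (ii), I would read off both inclusions directly from Lemma \ref{lem:tauiAEi}. The left equation of \eqref{eq:tauiA} gives $\tau_h(A)=\sum_{j=h}^d \tau_h(\th_j)E_j$, so $\tau_h(A) \in \text{\rm Span}\{E_j \,|\, i \leq j \leq d\}$ whenever $h \geq i$, settling one inclusion; the right equation of \eqref{eq:tauiA} expresses $E_h$ as a $\mathbb{K}$-linear combination of $\tau_j(A)$ with $h \leq j \leq d$, so $E_h \in \text{\rm Span}\{\tau_j(A) \,|\, i \leq j \leq d\}$ whenever $h \geq i$, giving the reverse inclusion. (Alternatively one could close the argument by a dimension count, as each span is generated by the $d-i+1$ linearly independent elements indexed $i,\ldots,d$.) For part (iii) I would first check that $\tau_i(A)$ lies in the intersection: it is in $\text{\rm Span}\{A^h \,|\, 0 \leq h \leq i\}$ because $\tau_i$ has degree $i$, and it equals $\sum_{j=i}^d \tau_i(\th_j)E_j \in \text{\rm Span}\{E_h \,|\, i \leq h \leq d\}$ by \eqref{eq:tauiA}; moreover $\tau_i(A)\neq 0$ since $\{\tau_h(A)\}_{h=0}^d$ is a basis. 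Using (i) and (ii) I would then rewrite the intersection as $\text{\rm Span}\{\tau_h(A) \,|\, 0 \leq h \leq i\}\cap\text{\rm Span}\{\tau_h(A) \,|\, i \leq h \leq d\}$, and invoke the fact that, relative to a fixed basis, the intersection of two coordinate subspaces is the coordinate subspace on the overlapping index set $\{0,\ldots,i\}\cap\{i,\ldots,d\}=\{i\}$.

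The only step needing genuine care is this last one: justifying that the intersection collapses to a single basis vector. This is precisely where rephrasing both spans through the common basis $\{\tau_h(A)\}_{h=0}^d$ does the real work, for the collapse follows from uniqueness of coordinates in that basis. Beyond this bookkeeping I expect no serious obstacle, since parts (i) and (ii) are triangularity statements and (iii) is a formal consequence of them.
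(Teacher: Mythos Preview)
Your proposal is correct and follows essentially the same approach as the paper: part (i) by the degree/triangularity observation, part (ii) by reading off both inclusions from Lemma \ref{lem:tauiAEi}, and part (iii) by intersecting the two spans after rewriting each relative to the common basis $\{\tau_h(A)\}_{h=0}^d$. The paper's proof is terser (one line per part), but the underlying argument is the same.
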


\begin{proof}
(i):
Recall that $\tau_h$ has degree $h$ for $0 \leq h \leq d$.

(ii):
Follows from Lemma \ref{lem:tauiAEi}.

(iii):
Immediate from (i), (ii) above.
\end{proof}

\medskip

Applying Lemma \ref{lem:taubasis} to $\Phi^{\Downarrow}$ we obtain the
following result.

\medskip

\begin{lemma}   \label{lem:etabasis}   \samepage
With reference to Notation {\rm \ref{notation}} the following
{\rm (i)}--{\rm (iii)} hold for $0 \leq i \leq d$.
\begin{itemize}
\item[\rm (i)]
  $\text{\rm Span}\{A^h \,|\, 0 \leq h \leq i\} 
    = \text{\rm Span}\{\eta_h(A) \,|\, 0 \leq h \leq i\}$.
\item[\rm (ii)]
  $\text{\rm Span}\{E_h \,|\, 0 \leq h \leq d-i \} 
  = \text{\rm Span}\{\eta_h(A) \,|\, i \leq h \leq d\}$.
\item[\rm (iii)] 
 $\eta_i(A)$ is a basis for 
   $\text{\rm Span}\{A^h \,|\, 0 \leq h \leq i\} 
       \cap \text{\rm Span}\{E_h \,|\, 0 \leq h \leq d-i \}$.
\end{itemize}
\end{lemma}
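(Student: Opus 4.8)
The plan is to derive Lemma \ref{lem:etabasis} from Lemma \ref{lem:taubasis} by applying the latter to the relative $\Phi^{\Downarrow}$, so the first thing I would do is record precisely how the relevant data transform under $\Downarrow$. Recall from Section 3 that $\Phi^{\Downarrow} = (A; \{E_{d-i}\}_{i=0}^d; A^*; \{E^*_i\}_{i=0}^d)$ is a tridiagonal system, in which the primitive idempotents of $A$ are reindexed by $i \mapsto d-i$; consequently the eigenvalue sequence of $A$ is reversed, so that the eigenvalue attached to the $i$-th idempotent of $\Phi^{\Downarrow}$ is $\theta_{d-i}$. Comparing \eqref{eq:deftau} and \eqref{eq:defeta}, this reversal shows that the polynomial playing the role of $\tau_i$ for $\Phi^{\Downarrow}$ is exactly $\eta_i$ of $\Phi$, and that the polynomial playing the role of $\eta_i$ for $\Phi^{\Downarrow}$ is $\tau_i$ of $\Phi$. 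The subalgebra ${\cal D}$ and the sequence $\{A^h\}_{h=0}^d$ are unchanged, since they depend only on $A$.

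With these identifications in hand I would simply quote the three parts of Lemma \ref{lem:taubasis} written for $\Phi^{\Downarrow}$ and read off the claim. For part (i), Lemma \ref{lem:taubasis}(i) applied to $\Phi^{\Downarrow}$ reads $\text{\rm Span}\{A^h \,|\, 0 \leq h \leq i\} = \text{\rm Span}\{\eta_h(A) \,|\, 0 \leq h \leq i\}$, which is Lemma \ref{lem:etabasis}(i) verbatim. For part (ii), the idempotents $E_h$ of $\Phi^{\Downarrow}$ are the $E_{d-h}$ of $\Phi$, so Lemma \ref{lem:taubasis}(ii) becomes $\text{\rm Span}\{E_{d-h} \,|\, i \leq h \leq d\} = \text{\rm Span}\{\eta_h(A) \,|\, i \leq h \leq d\}$; substituting $k = d-h$ on the left turns the index set into $0 \leq k \leq d-i$, giving Lemma \ref{lem:etabasis}(ii). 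Part (iii) follows the same way: the intersection in Lemma \ref{lem:taubasis}(iii) for $\Phi^{\Downarrow}$ is $\text{\rm Span}\{A^h \,|\, 0 \leq h \leq i\} \cap \text{\rm Span}\{E_{d-h} \,|\, i \leq h \leq d\}$, and after the same reindexing this is the intersection of Lemma \ref{lem:etabasis}(iii), with basis $\eta_i(A)$.

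Since Lemma \ref{lem:taubasis} is already established and $\Phi^{\Downarrow}$ is a tridiagonal system by Section 3, the only real work is bookkeeping, and I do not anticipate a genuine obstacle. The one place to be careful is the direction of the index reversal in the idempotent spans: one must check that $\{E_h \,|\, i \leq h \leq d\}$ for $\Phi^{\Downarrow}$ corresponds to $\{E_h \,|\, 0 \leq h \leq d-i\}$ for $\Phi$ rather than some shifted range, while the range $i \leq h \leq d$ on the polynomial side is left intact because the $\tau/\eta$ swap acts on the polynomials, not on the summation index. Once this is verified the three displayed identities transcribe directly, and the content of Lemma \ref{lem:etabasis} is precisely the $\Downarrow$-dual of Lemma \ref{lem:taubasis}.
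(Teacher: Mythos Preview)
Your proposal is correct and follows exactly the paper's approach: the paper derives Lemma~\ref{lem:etabasis} in one line by applying Lemma~\ref{lem:taubasis} to $\Phi^{\Downarrow}$, and your bookkeeping of how $\tau_i \leftrightarrow \eta_i$ and $E_h \leftrightarrow E_{d-h}$ under $\Downarrow$ is precisely what justifies that one line.
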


\section{Some bases for $\cal D$ and ${\cal D}^*$}

\indent
In this section we give some bases for $\cal D$ and ${\cal D}^*$
that will be useful later in the paper.
We will state our results for $\cal D$; 
of course similar results hold for ${\cal D}^*$.

\medskip

\begin{lemma}   \label{lem:replace}
With reference to Notation {\rm \ref{notation}} consider 
the following basis for $\cal D$:
\begin{equation}   \label{eq:E0Ed}
  E_0,E_1,\ldots,E_d.
\end{equation}
For $0 \leq n \leq d$, if we replace any $(n+1)$-subset of \eqref{eq:E0Ed}
by $I,A,A^2, \ldots, A^n$ then the result is still a basis for $\cal D$.
\end{lemma}

\begin{proof}
Let $\Delta$ denote a $(n+1)$-subset of $\{0,1,\ldots, d\}$
and let $\overline{\Delta}$ denote the complement of
$\Delta$ in $\{0,1,\ldots, d\}$. We show
\begin{equation}                  \label{eq:basisaux}     
   \{A^i\}_{i=0}^n \cup \{E_i\}_{i \in \overline{\Delta}}       
\end{equation}
is a basis for $\cal D$. The number of elements in \eqref{eq:basisaux}
is $d+1$ and this equals the dimension of $\cal D$. Therefore it
suffices to show the elements \eqref{eq:basisaux} span $\cal D$. 
Let $S$ denote the subspace of $\cal D$ spanned by \eqref{eq:basisaux}. 
To show ${\cal D} = S$ we show $E_i \in S$ for $i \in \Delta$. 
For $0 \leq j \leq n$ we have $A^j = \sum_{i=0}^d \theta^j_i E_i$.
In these equations we rearrange the terms to find
\begin{equation}           \label{eq:basisaux2}
  \sum_{i \in \Delta} \th^j_i E_i \in  S \qquad\qquad  (0 \leq j \leq n).
\end{equation}
In the linear system \eqref{eq:basisaux2} the coefficient matrix is 
Vandermonde and hence nonsingular. Therefore $E_i \in S$ for $i \in \Delta$.
Now $S={\cal D}$ and the result follows.
\end{proof}

\section{The space $R$}

\begin{definition}    \label{def:pi}   \samepage
With reference to Notation \ref{notation} we consider
the tensor product 
${\cal D}\otimes{\cal D}^*\otimes{\cal D}$
where $\otimes = \otimes_{\mathbb{K}}$.
We define a $\mathbb{K}$-linear transformation
\[
 \pi : \qquad
 \begin{array}{ccc}
  {\cal D}\otimes{\cal D}^*\otimes{\cal D}  & \qquad \to \qquad &  \text{End}(V) \\
   X \otimes Y \otimes Z  & \qquad \mapsto \qquad & E^*_0XYZE^*_0
 \end{array}
\]
We note that the image of $\pi$ is the span of 
$E^*_0{\cal D}{\cal D}^*{\cal D}E^*_0$.
\end{definition}

\begin{definition}    \label{def:R}  \samepage
With reference to Notation \ref{notation} let $R$
denote the sum of the following three subspaces of
${\cal D} \otimes {\cal D}^* \otimes {\cal D}$:
\begin{equation}        \label{eq:L}
 \text{Span}\{A^i \otimes E^*_j \,|\, 0 \leq i<j \leq d\}
    \otimes {\cal D},
\end{equation}
\begin{equation}        \label{eq:R} 
{\cal D} \otimes 
 \text{Span}\{E^*_j \otimes A^i \,|\, 0 \leq i<j \leq d\}, 
\end{equation}
\begin{equation}        \label{eq:M}
 \text{Span}\{E_i \otimes A^{*t} \otimes E_j
                    \,|\, 0 \leq i,j,t \leq d, \, t<|i-j| \}.
\end{equation}
\end{definition}

\begin{lemma}   \label{lem:kernel}   \samepage
With reference to Definitions {\rm \ref{def:pi}} and {\rm \ref{def:R}}
the space $R$ is contained in the kernel of $\pi$.
\end{lemma}

\begin{proof}
Routinely obtained using Lemma \ref{lem:supertrid}.
\end{proof}

\medskip

With reference to Notation \ref{notation} and Lemma \ref{lem:kernel},
we desire to understand the kernel of $\pi$.
To gain this understanding we systematically investigate $R$.
We proceed as follows.

\medskip

\begin{lemma}    \label{lem:DDsD}     \samepage
With reference to Notation {\rm \ref{notation}},
\[
   {\cal D} \otimes {\cal D}^* \otimes {\cal D} 
      = \sum_{t=0}^d {\cal D} \otimes \tau^*_t(A^*) \otimes {\cal D}
  \qquad\qquad  (\text{\rm direct sum}).
\]
\end{lemma}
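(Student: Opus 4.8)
The plan is to reduce the assertion to a single fact about the middle tensor factor, namely that $\{\tau^*_t(A^*)\}_{t=0}^d$ is a basis for ${\cal D}^*$. Each summand ${\cal D} \otimes \tau^*_t(A^*) \otimes {\cal D}$ is, by the convention implicit in the statement, the $\mathbb{K}$-subspace ${\cal D} \otimes \mathbb{K}\tau^*_t(A^*) \otimes {\cal D}$ spanned by all tensors $X \otimes \tau^*_t(A^*) \otimes Z$ with $X,Z \in {\cal D}$. Since the tensor product over the field $\mathbb{K}$ is additive in each argument, a direct sum decomposition ${\cal D}^* = \bigoplus_{t=0}^d \mathbb{K}\tau^*_t(A^*)$ of the middle factor induces the desired direct sum decomposition of ${\cal D} \otimes {\cal D}^* \otimes {\cal D}$. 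So the whole content of the lemma lies in decomposing ${\cal D}^*$.

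First I would establish that $\{\tau^*_t(A^*)\}_{t=0}^d$ is a basis for ${\cal D}^*$. This is Lemma \ref{lem:basistauiA} applied to the relative $\Phi^*$, in which $A^*$ plays the role of $A$. Concretely, $\{{A^*}^t\}_{t=0}^d$ is a basis for ${\cal D}^*$, and by Definition \ref{def:tau} each $\tau^*_t$ is monic of degree $t$, so the change-of-basis matrix expressing the $\tau^*_t(A^*)$ in terms of the ${A^*}^t$ is unitriangular and hence invertible. Thus the $d+1$ elements $\tau^*_t(A^*)$ are linearly independent and span ${\cal D}^*$, giving ${\cal D}^* = \bigoplus_{t=0}^d \mathbb{K}\tau^*_t(A^*)$.

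To assemble the decomposition I would fix any basis $\{u_a\}$ for the first factor ${\cal D}$ and any basis $\{w_b\}$ for the third factor ${\cal D}$. Because $\{\tau^*_t(A^*)\}_t$ is a basis for ${\cal D}^*$, the family $\{u_a \otimes \tau^*_t(A^*) \otimes w_b\}_{a,b,t}$ is a basis for ${\cal D} \otimes {\cal D}^* \otimes {\cal D}$. Grouping these basis vectors according to the value of $t$ exhibits ${\cal D} \otimes {\cal D}^* \otimes {\cal D}$ as the direct sum, over $t$, of the subspaces spanned by $\{u_a \otimes \tau^*_t(A^*) \otimes w_b\}_{a,b}$, and each such subspace is precisely ${\cal D} \otimes \tau^*_t(A^*) \otimes {\cal D}$. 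This yields both the equality of the spans and the directness of the sum.

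I do not anticipate a genuine obstacle: since $\mathbb{K}$ is a field, every module is free and the tensor product is exact, so no flatness or torsion subtleties intervene. The only points warranting a word of care are the interpretation of the summands ${\cal D} \otimes \tau^*_t(A^*) \otimes {\cal D}$ as $\mathbb{K}$-subspaces of the triple tensor product, and the verification that the sum is direct rather than merely spanning; both are settled transparently by passing to the explicit basis constructed in the second and third steps.
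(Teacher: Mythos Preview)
Your proposal is correct and follows essentially the same approach as the paper: the paper's proof simply applies Lemma~\ref{lem:basistauiA} to $\Phi^*$ to conclude that $\{\tau^*_t(A^*)\}_{t=0}^d$ is a basis for ${\cal D}^*$, and then says ``the result follows.'' Your argument is the same, just with the routine tensor-product bookkeeping written out in full.
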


\begin{proof}
Applying Lemma \ref{lem:basistauiA}(i) to $\Phi^*$ we find that
$\{\tau^*_t(A^*)\}_{t=0}^d$ is a basis for ${\cal D}^*$.
The result follows.
\end{proof}

\begin{definition}    \label{def:Rt}    \samepage
With reference to Notation {\rm \ref{notation}} and Definition {\rm \ref{def:R}},
for $0 \leq t \leq d$ let $R_t$ denote the intersection of $R$ with
${\cal D}\otimes \tau^*_t(A^*) \otimes {\cal D}$.
\end{definition}

\medskip

With reference to Notation \ref{notation} and Definition \ref{def:R}, 
our next goal is to  show $R=\sum_{t=0}^d R_t$ (direct sum).
The following lemma will be useful.

\medskip

\begin{lemma}   \label{lem:coincidepre}   \samepage
With reference to Notation {\rm \ref{notation}}
the following {\rm (i)}--{\rm (iii)} hold.
\begin{itemize}
\item[\rm (i)]
The space \eqref{eq:L} is the direct sum over $t=0,1,\ldots,d$
of the following subspaces:
\begin{equation}     \label{eq:Lt}
 \text{\rm Span}\{A^i \,|\, 0 \leq i<t\}
   \otimes \tau^*_t(A^*) \otimes {\cal D}.
\end{equation}
\item[\rm (ii)]
The space  \eqref{eq:R} is the direct sum over $t=0,1,\ldots,d$
of the following subspaces:
\begin{equation}     \label{eq:Rt}
 {\cal D} \otimes \tau^*_t(A^*) \otimes \text{\rm Span}\{A^i \,|\, 0 \leq i<t\}.
\end{equation}
\item[\rm (iii)]
The space  \eqref{eq:M} is the direct sum over $t=0,1,\ldots,d$
of the following subspaces:
\begin{equation}    \label{eq:Mt}
 \text{\rm Span}\{E_i \otimes \tau^*_t(A^*) \otimes E_j \,|\,
                          0 \leq i,j \leq d,\, t<|i-j| \}.
\end{equation}
\end{itemize}
\end{lemma}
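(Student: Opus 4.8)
The plan is to treat all three parts as one phenomenon: in each case the middle tensor factor is re-expressed in the basis $\{\tau^*_t(A^*)\}_{t=0}^d$ of ${\cal D}^*$, and the asserted decomposition is merely the sorting of the resulting terms according to the index $t$. A convenient feature is that directness is then free of charge. Indeed, by Lemma \ref{lem:DDsD} the sum $\sum_{t=0}^d {\cal D} \otimes \tau^*_t(A^*) \otimes {\cal D}$ is direct, and each of \eqref{eq:Lt}, \eqref{eq:Rt}, \eqref{eq:Mt} is contained in the summand ${\cal D} \otimes \tau^*_t(A^*) \otimes {\cal D}$; so any equality of spaces we establish will automatically be a direct sum over $t$. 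Thus the entire content of the lemma reduces to three equalities of subspaces, one per part.

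For part (i), I would apply Lemma \ref{lem:tauiAEi} to $\Phi^*$ to obtain the two triangular change-of-basis relations $E^*_j = \sum_{t=j}^d c_{jt}\,\tau^*_t(A^*)$ and $\tau^*_t(A^*) = \sum_{j=t}^d \tau^*_t(\th^*_j)\,E^*_j$, where the $c_{jt}$ are the (nonzero) scalars appearing there. To include \eqref{eq:L} into the sum of the spaces \eqref{eq:Lt}, take a generator $A^i \otimes E^*_j \otimes Z$ with $i<j$, substitute the first relation, and observe that every resulting term $A^i \otimes \tau^*_t(A^*) \otimes Z$ has $t \geq j > i$, hence lies in \eqref{eq:Lt}. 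For the reverse inclusion, take $A^i \otimes \tau^*_t(A^*) \otimes Z$ with $i<t$, substitute the second relation, and note that every resulting term $A^i \otimes E^*_j \otimes Z$ has $j \geq t > i$, hence lies in \eqref{eq:L}. Part (ii) is verbatim the same after transferring the index constraint from the first tensor factor to the third, the middle factor again being $E^*_j$; equivalently it can be read off from part (i) by exchanging the two outer factors. In both parts the point is that the triangularity of the base change ($t \geq j$ in one direction, $j \geq t$ in the other) is exactly matched to the strict inequality $i<j$, respectively $i<t$.

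For part (iii) the middle factor carries powers $A^{*t}$ rather than idempotents, so I would instead invoke Lemma \ref{lem:taubasis}(i) applied to $\Phi^*$, which gives $\text{Span}\{A^{*h} \,|\, 0 \leq h \leq n\} = \text{Span}\{\tau^*_h(A^*) \,|\, 0 \leq h \leq n\}$ for each $n$. Working at a fixed pair $(i,j)$, the condition $t<|i-j|$ means the middle factor ranges over $\text{Span}\{A^{*t} \,|\, 0 \leq t \leq |i-j|-1\}$, which by that span equality coincides with $\text{Span}\{\tau^*_t(A^*) \,|\, 0 \leq t \leq |i-j|-1\}$; hence $\text{Span}\{E_i \otimes A^{*t} \otimes E_j \,|\, t<|i-j|\} = \text{Span}\{E_i \otimes \tau^*_t(A^*) \otimes E_j \,|\, t<|i-j|\}$ for each $(i,j)$, and summing over all pairs yields \eqref{eq:M} as the sum of the spaces \eqref{eq:Mt}. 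I do not anticipate a serious obstacle; the only delicate point is the bookkeeping of strict inequalities in parts (i) and (ii), where one must confirm that the triangularity ranges $t \geq j$ and $j \geq t$ combine correctly with $i<j$ and $i<t$ to keep every term inside the prescribed subspace. Everything else, and in particular the directness, is supplied by Lemma \ref{lem:DDsD}.
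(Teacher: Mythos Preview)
Your proposal is correct and follows essentially the same approach as the paper: both arguments exploit the triangular change of basis between $\{E^*_j\}$ and $\{\tau^*_t(A^*)\}$ for parts (i)--(ii) and the span equality of Lemma~\ref{lem:taubasis}(i) for part (iii), with directness supplied by Lemma~\ref{lem:DDsD}. The only cosmetic difference is that for (i) the paper quotes the packaged span equality of Lemma~\ref{lem:taubasis}(ii) (applied to $\Phi^*$) in place of your explicit two-way inclusion via Lemma~\ref{lem:tauiAEi}, but the content is identical.
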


\begin{proof}
(i):
Applying Lemma \ref{lem:taubasis}(ii) to $\Phi^*$ we obtain
\begin{align*}
 \text{Span}\{A^i & \otimes E^*_t \, |\, 0 \leq i< t \leq d \} \\
 &= \sum_{i=0}^d A^i \otimes \text{Span}\{E^*_t \,|\, i<t \leq d \} \\
 &= \sum_{i=0}^d A^i \otimes \text{Span}\{\tau^*_t(A^*) \,|\, i<t\leq d \} \\
 &= \sum_{t=0}^d \text{Span}\{A^i \,|\, 0 \leq i<t \} \otimes \tau^*_t(A^*).
\end{align*}
In the above lines we tensor each term on the right by $\cal D$ to find
that the space \eqref{eq:L} is the sum over $t=0,1,...,d$
of the spaces \eqref{eq:Lt}. 
The sum is direct by Lemma \ref{lem:DDsD}.

(ii):
Similar to the proof of (i) above.

(iii):
Applying Lemma \ref{lem:taubasis}(i) to $\Phi^*$ we obtain
\begin{align*}
 \text{Span}\{E_i \otimes &A^{*t} \otimes E_j \,|\,
                     0 \leq i,j,t \leq d,\, t<|i-j| \}  \\
 &= \sum_{i=0}^d\sum_{j=0}^d
     E_i \otimes \text{Span}\{A^{*t}\,|\, 0 \leq t<|i-j|\} \otimes E_j \\
 &= \sum_{i=0}^d\sum_{j=0}^d
     E_i \otimes \text{Span}\{\tau^*_t(A^*)\,|\, 0\leq t<|i-j|\}\otimes E_j\\
 &= \sum_{t=0}^d \text{Span}\{E_i \otimes \tau^*_t(A^*) \otimes E_j
                                 \,|\, 0 \leq i,j \leq d,\, t<|i-j| \}.
\end{align*}
In other words the space \eqref{eq:M} is the sum over $t=0,1,\ldots,d$
of the spaces \eqref{eq:Mt}.
This sum is direct by Lemma \ref{lem:DDsD}.
\end{proof}

\begin{theorem}    \label{thm:coincide}   \samepage
With reference to Notation {\rm \ref{notation}} and 
Definition {\rm \ref{def:Rt}}
the following {\rm (i)}, {\rm (ii)} hold.
\begin{itemize}
\item[\rm (i)]
For $0 \leq t \leq d$ the subspace ${R}_t$ is the sum of
the spaces \eqref{eq:Lt}--\eqref{eq:Mt}.
\item[\rm (ii)]
${R} = \sum_{t=0}^d {R}_t$ (direct sum).
\end{itemize}
\end{theorem}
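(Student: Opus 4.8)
The plan is to treat the whole situation as a grading of ${\cal D}\otimes{\cal D}^*\otimes{\cal D}$ indexed by $t$, and to observe that $R$ is homogeneous with respect to this grading. First I would record, from Lemma \ref{lem:DDsD}, that the subspaces $W_t := {\cal D}\otimes\tau^*_t(A^*)\otimes{\cal D}$ satisfy ${\cal D}\otimes{\cal D}^*\otimes{\cal D}=\bigoplus_{t=0}^d W_t$; in particular the $W_t$ are linearly independent, so that every element of the ambient space has a unique expression as a sum of components lying in the various $W_t$.

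Next I would invoke Lemma \ref{lem:coincidepre}, which is precisely the statement that each of the three generating subspaces \eqref{eq:L}, \eqref{eq:R}, \eqref{eq:M} of $R$ is the direct sum over $t$ of the spaces \eqref{eq:Lt}, \eqref{eq:Rt}, \eqref{eq:Mt} respectively, and that each of \eqref{eq:Lt}, \eqref{eq:Rt}, \eqref{eq:Mt} is contained in $W_t$. Writing $S_t$ for the sum of the three spaces \eqref{eq:Lt}--\eqref{eq:Mt}, we therefore have $S_t\subseteq W_t$, while $R$, being the sum of the spaces \eqref{eq:L}, \eqref{eq:R}, \eqref{eq:M}, equals $\sum_{t=0}^d S_t$. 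Because each $S_t$ lies in the corresponding summand $W_t$ of a direct sum, the sum $\sum_{t=0}^d S_t$ is automatically direct; this already yields the directness asserted in (ii), provided I can identify $S_t$ with $R_t$.

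To finish, I would prove $R_t=S_t$, which gives both (i) and the labeling in (ii). The inclusion $S_t\subseteq R_t$ is immediate: each of \eqref{eq:Lt}, \eqref{eq:Rt}, \eqref{eq:Mt} sits inside the corresponding generator of $R$, so $S_t\subseteq R$, and also $S_t\subseteq W_t$, whence $S_t\subseteq R\cap W_t=R_t$. For the reverse inclusion, take $r\in R_t=R\cap W_t$. Since $r\in R=\bigoplus_{s=0}^d S_s$, write $r=\sum_{s=0}^d s_s$ with $s_s\in S_s\subseteq W_s$. As $r\in W_t$ and the $W_s$ are independent, the uniqueness of the decomposition in $\bigoplus_s W_s$ forces $s_s=0$ for $s\neq t$, so $r=s_t\in S_t$. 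Hence $R_t\subseteq S_t$, and therefore $R_t=S_t$.

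The argument is essentially formal once Lemma \ref{lem:coincidepre} and Lemma \ref{lem:DDsD} are in hand, so I do not expect a genuine obstacle here; the only point requiring care is to verify that the three separate decompositions supplied by Lemma \ref{lem:coincidepre} are all indexed compatibly by the \emph{same} grading $\{W_t\}_{t=0}^d$ of the ambient space. This compatibility holds because all three decompositions are taken relative to the single basis $\{\tau^*_t(A^*)\}_{t=0}^d$ of ${\cal D}^*$ furnished by Lemma \ref{lem:DDsD}, which is exactly why the middle tensor factor can serve as the grading variable throughout.
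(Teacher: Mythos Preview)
Your proof is correct and follows essentially the same approach as the paper: define $S_t$ (the paper calls it $R'_t$) as the sum of \eqref{eq:Lt}--\eqref{eq:Mt}, use Lemma~\ref{lem:coincidepre} to get $R=\sum_t S_t$ with $S_t\subseteq W_t$, and then invoke the direct sum decomposition of Lemma~\ref{lem:DDsD} to conclude $S_t=R\cap W_t=R_t$. The only difference is that you spell out the projection argument for $R_t=S_t$ explicitly, whereas the paper compresses it into a single sentence.
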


\begin{proof}
For $0 \leq t \leq d$ let ${R}'_t$ denote the sum of the spaces
\eqref{eq:Lt}, \eqref{eq:Rt}, \eqref{eq:Mt}. 
Note that ${R}'_t$ is contained in 
${\cal D} \otimes \tau^*_t(A^*) \otimes {\cal D}$,
and that 
${R} = \sum_{t=0}^d {R}'_t$ by Lemma \ref{lem:coincidepre}.
By these comments and Lemma  \ref{lem:DDsD} we find 
${R}'_t = {R}_t$  for $0 \leq t \leq d$.
The result follows.
\end{proof}

\section{A basis for $R_t$ and ${\cal D} \otimes \tau^*_t(A^*)\otimes{\cal D}$}

\indent
With reference to Notation \ref{notation} and Definition \ref{def:Rt},
for $0 \leq t \leq d$ we display a basis for $R_t$
and extend this to a basis for 
 ${\cal D} \otimes \tau^*_t(A^*)\otimes{\cal D}$.

\medskip

\begin{theorem}   \label{thm:basis1}  \samepage
With reference to Notation {\rm \ref{notation}}
and Definition {\rm \ref{def:Rt}},
for $0 \leq t \leq d$ the following sets of vectors together form a basis
for ${\cal D} \otimes \tau^*_t(A^*) \otimes {\cal D}$:
\begin{align}
 & \{A^i \otimes \tau^*_t(A^*) \otimes A^j \,|\,
              0 \leq i\leq d,\, 0 \leq j < t \},     \label{eq:T1}  \\
 & \{A^i \otimes \tau^*_t(A^*) \otimes A^j \,|\,
             0 \leq i < t,\, t \leq j \leq d \},       \label{eq:T2} \\
 & \{E_i \otimes \tau^*_t(A^*) \otimes E_j \,|\,
             0 \leq i,j \leq d,\, t<|i-j| \},        \label{eq:S3}  \\
 & \{E_i \otimes \tau^*_t(A^*) \otimes E_i  \,|\,
             0 \leq i \leq d-t\}.                  \label{eq:basis1}
\end{align} 
Moreover the sets \eqref{eq:T1}--\eqref{eq:S3} together form a basis for 
${R}_t$.
\end{theorem}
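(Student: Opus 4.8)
The plan is to fix $t$ and exploit that $\tau^*_t(A^*)$ is a single element of ${\cal D}^*$, which is nonzero since $\{\tau^*_t(A^*)\}_{t=0}^d$ is a basis for ${\cal D}^*$ (Lemma \ref{lem:basistauiA} applied to $\Phi^*$). Hence $X\otimes Z\mapsto X\otimes\tau^*_t(A^*)\otimes Z$ identifies ${\cal D}\otimes{\cal D}$ with ${\cal D}\otimes\tau^*_t(A^*)\otimes{\cal D}$, a space of dimension $(d+1)^2$, and throughout I suppress the middle factor. A routine count gives $|\eqref{eq:T1}\cup\eqref{eq:T2}|=(d+1)^2-(d-t+1)^2$ and $|\eqref{eq:S3}\cup\eqref{eq:basis1}|=(d-t+1)^2$. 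First I would record the decomposition ${\cal D}=P\oplus Q$ with $P=\text{Span}\{A^i\mid 0\le i<t\}$ and $Q=\text{Span}\{E_i\mid t\le i\le d\}$, which is immediate from Lemma \ref{lem:taubasis} since $\{\tau_h(A)\}_{h=0}^{t-1}$ is a basis for $P$ and $\{\tau_h(A)\}_{h=t}^{d}$ a basis for $Q$. Let $\rho_1:{\cal D}\to Q$ be the projection with kernel $P$ and set $\rho=\rho_1\otimes\rho_1$, a projection of ${\cal D}\otimes{\cal D}$ onto $Q\otimes Q$ whose kernel is ${\cal D}\otimes P+P\otimes{\cal D}$.

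Next I would dispatch the easy directions. The set \eqref{eq:S3} is a subset of the basis $\{E_i\otimes E_j\}$, hence linearly independent, and it is by definition a basis for the space \eqref{eq:Mt}. The union \eqref{eq:T1}$\cup$\eqref{eq:T2} is the subset of the monomial basis $\{A^i\otimes A^j\}$ consisting of the pairs with $i<t$ or $j<t$, so it is linearly independent; a direct check using ${\cal D}=P\oplus Q$ shows it spans exactly ${\cal D}\otimes P+P\otimes{\cal D}=\ker\rho$, which is the sum of the spaces \eqref{eq:Lt} and \eqref{eq:Rt}. By Theorem \ref{thm:coincide}(i), $R_t$ is the sum of \eqref{eq:Lt}, \eqref{eq:Rt}, \eqref{eq:Mt}, so $R_t=\ker\rho+\text{Span}(\eqref{eq:S3})$. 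Writing $C$ for \eqref{eq:S3} and $D$ for \eqref{eq:basis1}, both assertions now reduce to statements about $\rho$: I must show (a) $\rho(C)$ is linearly independent in $Q\otimes Q$, which makes \eqref{eq:T1}--\eqref{eq:S3} a basis for $R_t$; and (b) $\rho(C\cup D)$ is a basis for $Q\otimes Q$, which together with the basis \eqref{eq:T1}$\cup$\eqref{eq:T2} of $\ker\rho$ makes \eqref{eq:T1}--\eqref{eq:basis1} a basis for ${\cal D}\otimes{\cal D}$. As $|C\cup D|=(d-t+1)^2=\dim(Q\otimes Q)$, part (b) is equivalent to linear independence of $\rho(C\cup D)$.

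For this I would make $\rho_1$ explicit. Solving $E_i=(\text{polynomial in }A\text{ of degree}<t)+\sum_{m=t}^d b_{im}E_m$ — equivalently, expanding $E_i$ via Lemma \ref{lem:tauiAEi} and discarding the terms $\tau_j(A)$ with $j<t$, which lie in $P$ — one finds $\rho_1(E_i)=E_i$ for $t\le i\le d$, while for $0\le i<t$,
\[
 \rho_1(E_i)=-\sum_{m=t}^{d}p_i(\th_m)E_m,
\]
where $p_i$ is the interpolation polynomial of degree less than $t$ with $p_i(\th_n)=\delta_{in}$ for $0\le n<t$. Thus $\rho_1(E_t),\dots,\rho_1(E_d)$ are exactly the basis $E_t,\dots,E_d$ of $Q$, and each $\rho_1(E_i)$ with $i<t$ is an explicit combination of them. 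Since $\rho(E_i\otimes E_j)=\rho_1(E_i)\otimes\rho_1(E_j)$, claims (a) and (b) amount to the nonvanishing of the determinant of a square matrix whose columns are these tensor products written in the basis $\{E_k\otimes E_l\mid t\le k,l\le d\}$ of $Q\otimes Q$.

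The hard part is precisely this independence/determinant computation. The obstacle is that the vectors $\rho_1(E_i)$ with $i<t$ have full support on $E_t,\dots,E_d$, so no naive leading-term argument in the basis $\{E_k\otimes E_l\}$ succeeds. My plan to overcome this is to order the index pairs in $C\cup D$ and the basis $\{E_k\otimes E_l\}$ so that the transition matrix becomes block triangular: the clean pairs, those with both indices in $\{t,\dots,d\}$, contribute honest basis vectors $E_k\otimes E_l$ and form an identity block, while the remaining pairs, those with an index below $t$, are handled by splitting \eqref{eq:S3} into its two halves $i-j>t$ and $j-i>t$ and matching the diagonal \eqref{eq:basis1} against the complementary decomposition ${\cal D}=P\oplus\text{Span}\{E_i\mid 0\le i\le d-t\}$ furnished by Lemma \ref{lem:etabasis}. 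Choosing the two factors of $\rho$ compatibly with this split, the off-diagonal blocks can be triangulated and the surviving diagonal entries are products of differences $\th_i-\th_j$, nonzero because $\{\th_i\}_{i=0}^d$ are mutually distinct (Notation \ref{notation}). Verifying carefully that the two halves of \eqref{eq:S3} and the diagonal \eqref{eq:basis1} fill out exactly the basis vectors not already supplied by the clean pairs is the main obstacle, and I expect it to consume the bulk of the argument.
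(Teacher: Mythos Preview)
Your reduction is correct and nicely organized: identifying ${\cal D}\otimes\tau^*_t(A^*)\otimes{\cal D}$ with ${\cal D}\otimes{\cal D}$, showing that \eqref{eq:T1}$\cup$\eqref{eq:T2} is a basis for $\ker\rho={\cal D}\otimes P+P\otimes{\cal D}$, and reducing both assertions to the linear independence of $\rho(C)$ and $\rho(C\cup D)$ in $Q\otimes Q$ is all valid. This is a genuinely different packaging from the paper, which never introduces the projection $\rho$.

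The gap is in the last step. Your plan for proving that $\rho(C\cup D)$ is independent is only a sketch, and the sketch is not obviously coherent: you fix $\rho=\rho_1\otimes\rho_1$ with $\rho_1$ projecting onto $Q=\text{Span}\{E_t,\ldots,E_d\}$ along $P$, but then propose to ``choose the two factors of $\rho$ compatibly'' with the \emph{other} decomposition ${\cal D}=P\oplus\text{Span}\{E_0,\ldots,E_{d-t}\}$ from Lemma~\ref{lem:etabasis}. Changing one factor of $\rho$ changes the map and the kernel, so the earlier reduction no longer applies verbatim; if instead you mean only to change the target basis, you have not explained why the resulting matrix is block triangular. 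You yourself flag the matching of the non-clean pairs against the remaining basis vectors of $Q\otimes Q$ as ``the main obstacle,'' and no mechanism is given for why the diagonal entries come out as nonzero products of $\th_i-\th_j$.

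The paper bypasses this entirely by proving the dual statement: rather than independence, it shows that \eqref{eq:T1}--\eqref{eq:basis1} \emph{span} ${\cal D}\otimes{\cal D}$, which by your own dimension count is equivalent to your claim that $\rho(C\cup D)$ is a basis for $Q\otimes Q$. The spanning argument is a short induction on $i=0,1,\ldots,d-t$: assuming $E_j\otimes{\cal D}$ and ${\cal D}\otimes E_j$ lie in the span $S$ for all $j<i$, one uses Lemma~\ref{lem:replace} to write ${\cal D}$ with the basis
\[
E_0,\ldots,E_{i-1};\quad E_i;\quad I,A,\ldots,A^{t-1};\quad E_{i+t+1},\ldots,E_d,
\]
and then each piece of $E_i\otimes{\cal D}$ lands in $S$ by, respectively, the inductive hypothesis, \eqref{eq:basis1}, \eqref{eq:T1}, and \eqref{eq:S3}. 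No determinant, no explicit formula for $\rho_1(E_i)$, and no second decomposition are needed. Your framework is compatible with this finish: the induction proves precisely $\ker\rho+\text{Span}(C\cup D)={\cal D}\otimes{\cal D}$, which is what your reduction requires.
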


\begin{proof}
The span of \eqref{eq:T1}--\eqref{eq:S3} equals the span of
\eqref{eq:Lt}--\eqref{eq:Mt}
and this equals ${R}_t$ by Theorem \ref{thm:coincide}(i).
The dimension of 
${\cal D} \otimes \tau^*_t(A^*) \otimes {\cal D}$ is $(d+1)^2$.
The cardinality of the sets \eqref{eq:T1}--\eqref{eq:basis1} is
$t(d+1)$, $t(d-t+1)$, $(d-t)(d-t+1)$, $d-t+1$ respectively,
and the sum of these numbers is $(d+1)^2$.
Therefore the number of vectors in \eqref{eq:T1}--\eqref{eq:basis1}
is equal to the dimension of 
${\cal D} \otimes \tau^*_t(A^*) \otimes {\cal D}$.
Consequently to finish the proof it suffices to show
that \eqref{eq:T1}--\eqref{eq:basis1} together span 
${\cal D} \otimes \tau^*_t(A^*) \otimes {\cal D}$.
Let ${S}$ denote the span of \eqref{eq:T1}--\eqref{eq:basis1}.
We first claim that for $0 \leq i \leq d-t$, both
\[
   E_i \otimes \tau^*_t(A^*) \otimes {\cal D} \subseteq S, \qquad\qquad
   {\cal D} \otimes \tau^*_t(A^*) \otimes E_i \subseteq S.
\]
To prove the claim, by induction on $i$ we may assume
\begin{equation}   \label{eq:hypo}
   E_j \otimes \tau^*_t(A^*) \otimes {\cal D} \subseteq S, \qquad
 {\cal D} \otimes \tau^*_t(A^*) \otimes E_j \subseteq S 
 \qquad\qquad (0 \leq j < i).
\end{equation}
By Lemma \ref{lem:replace} the following vectors together form a basis 
for $\cal D$:
\[
  E_0,E_1,\ldots,E_{i-1}; \qquad
  E_i; \qquad
  I,A,A^2,\ldots,A^{t-1}; \qquad
  E_{i+t+1},E_{i+t+2},\ldots,E_d.
\]
Therefore $E_i \otimes \tau^*_t(A^*) \otimes {\cal D}$ is the sum of
the following spaces:
\begin{align}
 &E_i \otimes \tau^*_t(A^*) \otimes \text{Span}\{E_0,E_1,\ldots,E_{i-1}\},
                                                       \label{eq:space1} \\
 &E_i \otimes \tau^*_t(A^*) \otimes \text{Span}\{E_i\}, \label{eq:space2} \\
 &E_i \otimes \tau^*_t(A^*) \otimes \text{Span}\{I,A,A^2,\ldots,A^{t-1}\},
                                                        \label{eq:space3}  \\
 &E_i \otimes \tau^*_t(A^*) \otimes \text{Span}\{E_{i+t+1},E_{i+t+2},\ldots,E_d\}.
                                                         \label{eq:space4}
\end{align}
The space \eqref{eq:space1} is contained in $S$ by \eqref{eq:hypo},
the space \eqref{eq:space2} is contained in $S$ by \eqref{eq:basis1},
the space \eqref{eq:space3} is contained in $S$ by \eqref{eq:T1},
and the space \eqref{eq:space4} is contained in $S$ by \eqref{eq:S3}.
Therefore $E_i \otimes \tau^*_t(A^*) \otimes {\cal D}$
is contained in $S$.
Similarly one shows that
${\cal D} \otimes \tau^*_t(A^*) \otimes E_i$ is contained in $S$ and the
claim is proved. 
Next we claim that
$E_i \otimes \tau^*_t(A^*) \otimes {\cal D}$ is contained in $S$ for 
$d-t<i \leq d$.
By Lemma \ref{lem:replace} the following vectors together form a basis for 
$\cal D$:
\[
   E_0,E_1,\ldots,E_{d-t};  \qquad
   I,A,A^2,\ldots,A^{t-1}.
\]
Therefore $E_i \otimes \tau^*_t(A^*) \otimes {\cal D}$ is the sum of
the following spaces:
\begin{align}
 & E_i \otimes \tau^*_t(A^*) \otimes \text{Span}\{E_0,E_1,\ldots,E_{d-t}\}, 
                                                    \label{eq:space5} \\
 & E_i \otimes \tau^*_t(A^*) \otimes \text{Span}\{I,A,A^2,\ldots,A^{t-1}\}.
                                                    \label{eq:space6}
\end{align}
The space \eqref{eq:space5} is contained in $S$ by the first claim, 
and the space \eqref{eq:space6} is contained in $S$ by \eqref{eq:T1}. 
Therefore $E_i \otimes \tau^*_t(A^*) \otimes {\cal D}$ is contained in $S$
and the second claim is proved.
By the two claims and since $\{E_i\}_{i=0}^d$ is a basis for $\cal D$, 
we find ${\cal D} \otimes \tau^*_t(A^*) \otimes {\cal D}$ is contained in $S$.
In other words \eqref{eq:T1}--\eqref{eq:basis1} together span 
${\cal D} \otimes \tau^*_t(A^*) \otimes {\cal D}$ as desired.
The result follows.
\end{proof}

\begin{corollary}  \label{cor:dim}   \samepage
With reference to Notation {\rm \ref{notation}} and
Definition {\rm \ref{def:Rt}}
the following {\rm (i)}--{\rm (iv)} hold.
\begin{itemize}
\item[\rm (i)]
For $0 \leq t \leq d$ the dimension of $R_t$ is $d^2+d+t$.
\item[\rm (ii)]
For $0 \leq t \leq d$ the codimension of $R_t$ 
in ${\cal D} \otimes \tau^*_t(A^*) \otimes {\cal D}$ is $d-t+1$.
\item[\rm (iii)]
The dimension of $R$ is $d(d+1)(2d+3)/2$.
\item[\rm (iv)] 
The codimension of $R$ in 
${\cal D} \otimes {\cal D}^* \otimes {\cal D}$ is $(d+1)(d+2)/2$.
\end{itemize}
\end{corollary}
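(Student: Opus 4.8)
The plan is to treat Corollary~\ref{cor:dim} entirely as a bookkeeping exercise built on Theorem~\ref{thm:basis1} together with the direct-sum decompositions already established. Theorem~\ref{thm:basis1} hands us an explicit basis of $R_t$ (the sets \eqref{eq:T1}, \eqref{eq:T2}, \eqref{eq:S3}) and an explicit basis of ${\cal D}\otimes\tau^*_t(A^*)\otimes{\cal D}$ (those three sets together with \eqref{eq:basis1}). Since a dimension is just the size of a basis, all four parts will follow once the relevant cardinalities are summed and the arithmetic is simplified.

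First I would prove (i). The proof of Theorem~\ref{thm:basis1} already records the cardinalities of the three basis sets for $R_t$ as $t(d+1)$, $t(d-t+1)$, and $(d-t)(d-t+1)$. Adding these and expanding, the cross terms in $dt$ and in $t^2$ cancel, leaving $\dim R_t = d^2+d+t$. For (ii) there are two equally short routes: subtract this from $\dim({\cal D}\otimes\tau^*_t(A^*)\otimes{\cal D})=(d+1)^2$ to get the codimension $d-t+1$, or simply observe that the single extra basis set \eqref{eq:basis1} needed to pass from $R_t$ to the ambient space has cardinality $d-t+1$.

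Next I would handle (iii) and (iv) by summing over $t$. By Theorem~\ref{thm:coincide}(ii) the sum $R=\sum_{t=0}^d R_t$ is direct, so $\dim R=\sum_{t=0}^d(d^2+d+t)$; pulling out the constant part and using $\sum_{t=0}^d t = d(d+1)/2$ gives $d(d+1)^2+d(d+1)/2=d(d+1)(2d+3)/2$, which is (iii). For (iv), Lemma~\ref{lem:DDsD} makes ${\cal D}\otimes{\cal D}^*\otimes{\cal D}=\sum_{t=0}^d {\cal D}\otimes\tau^*_t(A^*)\otimes{\cal D}$ direct as well, and since $R$ respects this decomposition the codimension of $R$ is the sum of the codimensions from (ii), namely $\sum_{t=0}^d(d-t+1)=\sum_{s=1}^{d+1}s=(d+1)(d+2)/2$. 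One can cross-check this against (iii) by verifying $(d+1)^3-d(d+1)(2d+3)/2=(d+1)(d+2)/2$.

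There is no genuine obstacle here: the content lives entirely in Theorem~\ref{thm:basis1}, and what remains is elementary counting and summation. The only points demanding a moment's care are the cancellation in (i)---one must check that the $dt$ and $t^2$ contributions from the three sets really do telescope to the clean value $d^2+d+t$---and the bookkeeping in (iv), where it is cleanest to exploit the direct-sum structure rather than expand $(d+1)^3$ directly.
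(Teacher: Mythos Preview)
Your proposal is correct and follows essentially the same approach as the paper: both extract the dimensions and codimensions directly from the explicit basis of Theorem~\ref{thm:basis1} and then sum over $t$ using the direct-sum decompositions of Lemma~\ref{lem:DDsD} and Theorem~\ref{thm:coincide}(ii). The only cosmetic difference is that the paper reads off the codimension $d-t+1$ first from the cardinality of \eqref{eq:basis1} and then subtracts to get (i), whereas you sum the three basis cardinalities to get (i) first.
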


\begin{proof}
(i), (ii):
The dimension of ${\cal D} \otimes \tau^*_t(A^*) \otimes {\cal D}$
is $(d+1)^2$. 
By \eqref{eq:basis1} the codimension of $R_t$ in
${\cal D} \otimes \tau^*_t(A^*) \otimes {\cal D}$ is $d-t+1$. 
The result follows.

(iii):
Sum the dimension in (i) over $t=0,1,\ldots, d$.

(iv):
Sum the codimension in (ii) over $t=0,1,\ldots,d$.
\end{proof}

\section{The map $\ddagger$}

\begin{definition}        \label{def:dd}   \samepage
With reference to Notation {\rm \ref{notation}} 
we define a $\mathbb{K}$-linear transformation
\[
 \ddagger: \qquad
 \begin{array}{ccc}
  {\cal D}\otimes{\cal D}^* \otimes{\cal D} 
     & \qquad \to \qquad & {\cal D} \otimes {\cal D}^* \otimes{\cal D} \\
   X \otimes Y \otimes Z  & \qquad \mapsto \qquad & Z \otimes Y \otimes X
 \end{array}
\]
We call $\ddagger$ the {\em transpose map}.
We observe that $\ddagger$ is an involution.
\end{definition}

\begin{proposition}         \label{prop:dd}   \samepage
With reference to Notation {\rm \ref{notation}} and
Definition {\rm \ref{def:dd}}
the following {\rm (i)}--{\rm (iii)} hold.
\begin{itemize}
\item[\rm (i)]
$R$ is invariant under $\ddagger$.
\item[\rm (ii)]
For $0 \leq t \leq d$ the space 
${\cal D} \otimes \tau^*_t(A^*) \otimes {\cal D}$
is invariant under $\ddagger$.
\item[\rm (iii)]
For $0 \leq t \leq d$ the space $R_t$ is invariant under $\ddagger$.
\end{itemize}
\end{proposition}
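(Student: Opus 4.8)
The plan is to exploit the fact that $\ddagger$ interchanges the first and third tensor factors while leaving the middle factor fixed, together with the manifest $i \leftrightarrow j$ symmetry built into the defining conditions of $R$. I would establish (i) and (ii) directly from Definition \ref{def:R} and the structure of the tensor product, and then deduce (iii) at once: since $R_t = R \cap ({\cal D} \otimes \tau^*_t(A^*) \otimes {\cal D})$ by Definition \ref{def:Rt}, and since $\ddagger$ is a $\mathbb{K}$-linear involution, the intersection of two $\ddagger$-invariant subspaces is again $\ddagger$-invariant.

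For (ii) the claim is essentially immediate. A simple tensor $X \otimes \tau^*_t(A^*) \otimes Z$ with $X, Z \in {\cal D}$ is sent by $\ddagger$ to $Z \otimes \tau^*_t(A^*) \otimes X$, which again lies in ${\cal D} \otimes \tau^*_t(A^*) \otimes {\cal D}$, because the middle factor is unchanged and the two outer factors both range over ${\cal D}$. Since such simple tensors span the space, invariance follows by linearity.

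For (i) I would check the three summands \eqref{eq:L}, \eqref{eq:R}, \eqref{eq:M} of $R$ on their spanning simple tensors. A generator $A^i \otimes E^*_j \otimes Z$ of \eqref{eq:L} (with $0 \leq i < j \leq d$ and $Z \in {\cal D}$) is sent to $Z \otimes E^*_j \otimes A^i$, which is a generator of \eqref{eq:R}; hence $\ddagger$ maps \eqref{eq:L} into \eqref{eq:R}, and, $\ddagger$ being an involution, the same computation read backwards maps \eqref{eq:R} into \eqref{eq:L}. A generator $E_i \otimes A^{*t} \otimes E_j$ of \eqref{eq:M} (with $t < |i-j|$) is sent to $E_j \otimes A^{*t} \otimes E_i$; the defining inequality survives since $|i-j| = |j-i|$, so $\ddagger$ maps \eqref{eq:M} into itself. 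Summing the three cases, $\ddagger$ maps $R$ into $R$, and invariance follows.

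There is no genuine obstacle here; the whole content is the observation that the three conditions defining $R$ are symmetric under the simultaneous exchange of the two outer tensor slots. The only point deserving a moment's attention is that the constraint $t < |i-j|$ in \eqref{eq:M} is symmetric in $i$ and $j$, which it plainly is. As an alternative route to (iii) that bypasses (i) and (ii), one could instead run the same three-way case check directly on the description of $R_t$ furnished by Theorem \ref{thm:coincide}(i): $\ddagger$ swaps \eqref{eq:Lt} with \eqref{eq:Rt} and fixes \eqref{eq:Mt}, by exactly the computations above with the exponent $i$ restricted to $0 \leq i < t$.
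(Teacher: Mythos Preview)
Your proof is correct and matches the paper's argument for (i) and (ii): the paper likewise observes that $\ddagger$ swaps \eqref{eq:L} with \eqref{eq:R} and fixes \eqref{eq:M}, and declares (ii) ``Clear.'' For (iii) the paper takes your alternative route via Theorem~\ref{thm:coincide}(i) (showing $\ddagger$ swaps \eqref{eq:Lt} with \eqref{eq:Rt} and fixes \eqref{eq:Mt}), whereas your primary argument---that the intersection of two $\ddagger$-invariant subspaces is $\ddagger$-invariant---is a slightly cleaner deduction directly from Definition~\ref{def:Rt} that avoids invoking the decomposition theorem.
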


\begin{proof}
(i):
By Definition \ref{def:R} the space $R$ is the sum of 
\eqref{eq:L}--\eqref{eq:M}.
The map $\ddagger$ exchanges \eqref{eq:L}, \eqref{eq:R}
and leaves \eqref{eq:M} invariant. 
The result follows.

(ii):
Clear.

(iii):
By Theorem \ref{thm:coincide}(i) the space $R_t$ is the sum of
\eqref{eq:Lt}--\eqref{eq:Mt}.
The map $\ddagger$ exchanges \eqref{eq:Lt}, \eqref{eq:Rt} and
leaves \eqref{eq:Mt} invariant. 
The result follows.
\end{proof}

\begin{theorem}  \label{thm:dd}  \samepage
With reference to 
 Notation {\rm \ref{notation}} and
Definition {\rm \ref{def:dd}}
the following {\rm (i)}, {\rm (ii)} hold.
\begin{itemize}
\item[\rm (i)]
For $0 \leq t \leq d$ the image of 
${\cal D} \otimes \tau^*_t(A^*) \otimes {\cal D}$ 
under $1-\ddagger$ is contained in $R_t$.
\item[\rm (ii)]
The image of ${\cal D} \otimes {\cal D}^* \otimes {\cal D}$ under 
$1-\ddagger$ is contained in $R$.
\end{itemize}
\end{theorem}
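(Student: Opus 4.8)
The plan is to reduce the statement to a computation on a basis and then exploit the block structure already developed. For part (ii), note that $\ddagger$ is an involution, so $\mathcal{D}\otimes\mathcal{D}^*\otimes\mathcal{D}$ decomposes into the $(+1)$- and $(-1)$-eigenspaces of $\ddagger$, and the image of $1-\ddagger$ is precisely the $(-1)$-eigenspace, namely the span of all $X\otimes Y\otimes Z - Z\otimes Y\otimes X$. Since $\mathcal{D}^*$ is spanned by $\{\tau^*_t(A^*)\}_{t=0}^d$ (Lemma \ref{lem:basistauiA} applied to $\Phi^*$, as in Lemma \ref{lem:DDsD}), and since each $\mathcal{D}\otimes\tau^*_t(A^*)\otimes\mathcal{D}$ is $\ddagger$-invariant by Proposition \ref{prop:dd}(ii), the whole problem splits as a direct sum over $t$. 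Thus part (ii) follows immediately from part (i) once we sum over $t=0,1,\ldots,d$ and invoke Lemma \ref{lem:DDsD}. So the real content is part (i).

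For part (i), I would work entirely inside the fixed summand $\mathcal{D}\otimes\tau^*_t(A^*)\otimes\mathcal{D}$, which by Proposition \ref{prop:dd}(ii) is $\ddagger$-invariant, so $(1-\ddagger)$ maps it into itself; the claim is that the image lands in $R_t$. The most economical approach is to test $1-\ddagger$ on a spanning set and check that each image lies in $R_t$. A natural spanning set is $\{E_i\otimes\tau^*_t(A^*)\otimes E_j\,|\,0\le i,j\le d\}$, since $\{E_i\}_{i=0}^d$ is a basis for $\mathcal{D}$. On such a generator, $1-\ddagger$ yields
\[
 E_i\otimes\tau^*_t(A^*)\otimes E_j - E_j\otimes\tau^*_t(A^*)\otimes E_i.
\]
When $i=j$ this is $0$, which is trivially in $R_t$. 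When $|i-j|>t$, both terms individually lie in the space \eqref{eq:Mt}, hence in $R_t$ by Theorem \ref{thm:coincide}(i). The remaining case is $0<|i-j|\le t$, and this is where the work concentrates.

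The hard part is therefore to show that $E_i\otimes\tau^*_t(A^*)\otimes E_j - E_j\otimes\tau^*_t(A^*)\otimes E_i\in R_t$ when $0<|i-j|\le t$. Here neither term separately lies in $R_t$, so the cancellation in the difference must be exploited. The idea is to rewrite the $E$'s in terms of the power basis using the block-triangular relations of Section 5. Assume without loss $i<j$. Using Lemma \ref{lem:tauiAEi}, one can express $E_i$ and $E_j$ through $\tau_h(A)$ (or $\eta_h(A)$), and the key structural fact is that the spaces \eqref{eq:Lt} and \eqref{eq:Rt} already absorb all terms $A^h\otimes\tau^*_t(A^*)\otimes(\cdot)$ with $h<t$, respectively $(\cdot)\otimes\tau^*_t(A^*)\otimes A^h$ with $h<t$. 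Since $|i-j|\le t$, the difference of the two idempotent tensors can be arranged, after substituting the low-degree power-basis expansions, to be a combination of vectors of exactly these two types plus a $\ddagger$-antisymmetric remainder supported on the diagonal terms $E_k\otimes\tau^*_t(A^*)\otimes E_k$, which vanishes under antisymmetrization. I expect the cleanest route is to argue via Theorem \ref{thm:basis1}: that theorem exhibits \eqref{eq:T1}--\eqref{eq:basis1} as a basis of $\mathcal{D}\otimes\tau^*_t(A^*)\otimes\mathcal{D}$ with \eqref{eq:T1}--\eqref{eq:S3} spanning $R_t$, the complement being only the diagonal vectors \eqref{eq:basis1}; since $1-\ddagger$ kills the symmetric part and in particular annihilates each diagonal generator \eqref{eq:basis1}, its image is forced into the span of the remaining (antisymmetric combinations of) basis vectors, all of which lie in $R_t$. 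Verifying that the complementary directions are spanned precisely by the $\ddagger$-symmetric diagonal vectors is the one point requiring care, but it follows directly from the explicit basis in Theorem \ref{thm:basis1} together with $\ddagger$-invariance from Proposition \ref{prop:dd}(iii).
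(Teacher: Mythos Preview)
Your proposal is correct, and the argument you land on at the end is exactly the paper's proof: decompose ${\cal D}\otimes\tau^*_t(A^*)\otimes{\cal D}=R_t\oplus C$ via Theorem~\ref{thm:basis1}, where $C$ is spanned by the diagonal vectors \eqref{eq:basis1}; then $(1-\ddagger)C=0$ since each diagonal vector is $\ddagger$-fixed, and $(1-\ddagger)R_t\subseteq R_t$ by Proposition~\ref{prop:dd}(iii). Part (ii) follows by summing over $t$ exactly as you say.

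The case analysis on $E_i\otimes\tau^*_t(A^*)\otimes E_j$ that occupies the middle of your write-up is an unnecessary detour: once you invoke Theorem~\ref{thm:basis1} and Proposition~\ref{prop:dd}(iii), the ``hard case'' $0<|i-j|\le t$ never needs to be handled separately, and the vague appeal to Lemma~\ref{lem:tauiAEi} can be dropped entirely. Also, your phrasing ``its image is forced into the span of the remaining (antisymmetric combinations of) basis vectors'' is slightly off; the clean statement is simply that $(1-\ddagger)$ sends each basis vector of $R_t$ into $R_t$ and each basis vector of $C$ to $0$, so the whole image lies in $R_t$. No antisymmetric-combination reasoning is needed, and this avoids any implicit reliance on characteristic $\neq 2$.
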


\begin{proof}
(i): 
Let $C$ denote the subspace of 
${\cal D} \otimes \tau^*_t(A^*) \otimes {\cal D}$
spanned by the elements \eqref{eq:basis1}. 
By Theorem \ref{thm:basis1} the space
${\cal D} \otimes \tau^*_t(A^*) \otimes {\cal D}$
is the direct sum of $R_t$ and $C$. 
By Proposition \ref{prop:dd}(iii)
the image of $R_t$ under $1-\ddagger$ is contained in $R_t$. 
By \eqref{eq:basis1} the image of $C$ under $1-\ddagger$ is zero. 
The result follows.

(ii):
Combine Lemma \ref{lem:DDsD}, Theorem \ref{thm:coincide}(ii), 
and (i) above.
\end{proof}

\section{A complement for $R$ in 
  ${\cal D} \otimes {\cal D}^* \otimes {\cal D}$}

\indent
With reference to Notation \ref{notation} and Definition \ref{def:R}, 
our goal in this section is to show that the 
elements 
$\{E_i \otimes \tau^*_{j-i}(A^*) \otimes E_j \,|\, 0 \leq i\leq j \leq d\}$
form a basis for a complement of $R$ in 
${\cal D} \otimes {\cal D}^* \otimes {\cal D}$.
We begin with a slightly technical lemma.

\medskip

\begin{lemma}           \label{eq:tech}    \samepage
With reference to Notation {\rm \ref{notation}}
and Definition {\rm \ref{def:Rt}},
for $0 \leq t \leq d$ and
$0 \leq i <j \leq i+t \leq d$ the space 
\begin{equation}    \label{eq:auxd1}
 R_t + \text{\rm Span}\{E_h \otimes \tau^*_t(A^*) \otimes E_h \,|\, 0 \leq h < i\}  
\end{equation}
contains both
\begin{equation}    \label{eq:auxd2}
  f^t_{ij}(\th_i) E_i \otimes \tau^*_t(A^*) \otimes E_i  
  +  f^t_{ij}(\th_j) E_i \otimes \tau^*_t(A^*) \otimes E_j, 
\end{equation}
\begin{equation}    \label{eq:auxd3}
 f^t_{ij}(\th_i) E_i \otimes \tau^*_t(A^*) \otimes E_i  
  +  f^t_{ij}(\th_j) E_j \otimes \tau^*_t(A^*) \otimes E_i,
\end{equation}
where 
$f^t_{ij} = \prod_{h=i+1,\,h \neq j}^{i+t} (\lambda-\th_h)$.
\end{lemma}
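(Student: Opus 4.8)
The plan is to prove \eqref{eq:auxd3} by induction on $i$, and then to deduce \eqref{eq:auxd2} from it for free. The last step is cheap: the transpose map $\ddagger$ carries the expression \eqref{eq:auxd3} to \eqref{eq:auxd2}, fixes each diagonal tensor $E_h \otimes \tau^*_t(A^*) \otimes E_h$, and preserves $R_t$ by Proposition \ref{prop:dd}(iii); hence it preserves the whole space \eqref{eq:auxd1}, so \eqref{eq:auxd2} and \eqref{eq:auxd3} are equivalent. Thus it suffices to establish \eqref{eq:auxd3}, and I would organize the induction on $i$ so that the inductive hypothesis is that the full lemma (both \eqref{eq:auxd2} and \eqref{eq:auxd3}, for all admissible $j$ and $t$) holds for every first index strictly smaller than $i$.

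The point of departure is that $f^t_{ij}$ has degree $t-1$, so $f^t_{ij}(A)$ lies in $\text{Span}\{I,A,\dots,A^{t-1}\}$ and therefore $f^t_{ij}(A) \otimes \tau^*_t(A^*) \otimes E_i$ lies in the subspace \eqref{eq:Lt} of $R_t$ (using Theorem \ref{thm:coincide}(i)). I would then expand this element through the spectral decomposition $f^t_{ij}(A)=\sum_{k=0}^d f^t_{ij}(\theta_k)E_k$, rewriting it as $\sum_k f^t_{ij}(\theta_k)\,E_k \otimes \tau^*_t(A^*) \otimes E_i$. Since the roots of $f^t_{ij}$ are exactly the $\theta_h$ with $h \in \{i+1,\dots,i+t\}\setminus\{j\}$, the surviving indices $k$ fall into four groups: $k=i$ and $k=j$, whose terms are precisely the two summands of \eqref{eq:auxd3}; the indices with $|k-i|>t$, whose terms lie in \eqref{eq:Mt} and hence in $R_t$; and the ``leftover'' indices with $k<i$ and $i-k\le t$. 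Because the displayed element itself lies in $R_t \subseteq$ \eqref{eq:auxd1}, once I show each leftover term lies in \eqref{eq:auxd1} the desired combination \eqref{eq:auxd3} will follow by subtraction.

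The crux, and the step I expect to be the main obstacle, is disposing of the leftover terms $E_k \otimes \tau^*_t(A^*) \otimes E_i$ with $\max(0,i-t)\le k<i$; these are genuinely off-diagonal and are not covered by \eqref{eq:Mt} (as $|k-i|\le t$), so they must be reduced using the induction. For such a $k$ the triple $(k,i)$ satisfies $0\le k<i\le k+t\le d$, so the inductive hypothesis applies at the smaller first index $k$. Invoking the transpose form \eqref{eq:auxd2} for the pair $(k,i)$ shows that $f^t_{ki}(\theta_k)\,E_k \otimes \tau^*_t(A^*) \otimes E_k + f^t_{ki}(\theta_i)\,E_k \otimes \tau^*_t(A^*) \otimes E_i$ lies in $R_t + \text{Span}\{E_h \otimes \tau^*_t(A^*) \otimes E_h \mid h<k\} \subseteq$ \eqref{eq:auxd1}. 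The first summand is a diagonal tensor of index $k<i$, hence already in \eqref{eq:auxd1}, and the scalar $f^t_{ki}(\theta_i)$ is nonzero because $\theta_i$ is the excluded (non-root) factor of $f^t_{ki}$; dividing it out gives $E_k \otimes \tau^*_t(A^*) \otimes E_i \in$ \eqref{eq:auxd1}, as required. The base case $i=0$ has no leftover indices, so the computation of the second paragraph yields \eqref{eq:auxd3} directly, and the induction closes; finally \eqref{eq:auxd2} at index $i$ follows by applying $\ddagger$ as noted above.
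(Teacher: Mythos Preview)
Your argument is correct and is essentially the same as the paper's: both expand $f^t_{ij}(A)$ spectrally inside a tensor that lies in $R_t$ by degree considerations, kill the far-off-diagonal terms via \eqref{eq:Mt}, and handle the near-diagonal leftovers by the inductive hypothesis together with the nonvanishing of $f^t_{ki}(\theta_i)$, finally invoking $\ddagger$ (via Theorem~\ref{thm:dd}(i)/Proposition~\ref{prop:dd}(iii)) to pass between \eqref{eq:auxd2} and \eqref{eq:auxd3}. The only cosmetic difference is that the paper expands the right tensor factor to obtain \eqref{eq:auxd2} first and then applies $\ddagger$ to get \eqref{eq:auxd3}, whereas you expand the left factor to obtain \eqref{eq:auxd3} first---a mirror-image of the same proof.
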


\begin{proof}
We fix $t$ and show by induction on $i=0,1,\ldots, d-t$
that each of \eqref{eq:auxd2}, \eqref{eq:auxd3} is contained in 
\eqref{eq:auxd1} for $i<j\leq i+t$.
Concerning \eqref{eq:auxd2}, in the equation
$f^t_{ij}(A) = \sum_{n=0}^d f^t_{ij}(\th_n)E_n$
we tensor each term on the left by $E_i \otimes \tau^*_t(A^*)$ to get
\begin{equation}    \label{eq:auxd4}
 E_i \otimes \tau^*_t(A^*) \otimes f^t_{ij}(A)  
 =  \sum_{n=0}^d  f^t_{ij}(\th_n) E_i \otimes \tau^*_t(A^*) \otimes E_n.
\end{equation}
We examine the terms in \eqref{eq:auxd4}. 
The left side of \eqref{eq:auxd4} is in $R_t$ by \eqref{eq:T1} and since 
$f^t_{ij}$ has degree $t-1$.
For $0 \leq n \leq d$ consider the $n$-summand on the right in \eqref{eq:auxd4}.
First assume $0 \leq n < i-t$. Then the $n$-summand is in $R_t$ by \eqref{eq:S3}.
Next assume $i-t\leq  n < i$. 
By \eqref{eq:auxd3} and induction,
\begin{align*}
 f^t_{ni}(\th_n) E_n \otimes \tau^*_t(A^*) &\otimes E_n + 
     f^t_{ni}(\th_i) E_i \otimes \tau^*_t(A^*) \otimes E_n    \\
 & \in R_t + 
     \text{Span}\{E_h \otimes \tau^*_t(A^*) \otimes E_h \,|\, 0 \leq h < n\}.
\end{align*}
By this and since $f^t_{ni}(\th_i)$ is nonzero,
\begin{equation}    \label{eq:auxd5}
 E_i \otimes \tau^*_t(A^*) \otimes E_n
 \in R_t + 
 \text{Span}\{E_h \otimes \tau^*_t(A^*) \otimes E_h \,|\, 0 \leq h \leq n\}.
\end{equation}
Therefore our $n$-summand is in \eqref{eq:auxd1}.
Next assume $i+1\leq n \leq i+t$, $n\not=j$. Then the $n$-summand is $0$
since $f^t_{ij}(\th_n)=0$. 
Next assume $i+t<n\leq d$. Then the $n$-summand is in $R_t$ by \eqref{eq:S3}.
By these comments the expression \eqref{eq:auxd2} is 
contained in \eqref{eq:auxd1}.
By this and Theorem \ref{thm:dd}(i) the expression \eqref{eq:auxd3} is 
contained in \eqref{eq:auxd1}.
\end{proof}

\begin{proposition}   \label{prop:basis}  \samepage
With reference to Notation {\rm \ref{notation}} and
Definition {\rm \ref{def:Rt}},
for $0 \leq t \leq d$ the vectors 
\begin{equation}   \label{eq:basis}
  \{E_i \otimes \tau^*_t(A^*) \otimes E_{i+t} \,|\, 
               0 \leq i \leq d-t\}
\end{equation}
form a basis for a complement of ${R}_t$
in ${\cal D}\otimes \tau^*_t(A^*) \otimes {\cal D}$.
\end{proposition}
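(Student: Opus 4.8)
The plan is to exploit the dimension count together with the technical Lemma \ref{eq:tech}. By Corollary \ref{cor:dim}(ii) the codimension of $R_t$ in ${\cal D}\otimes \tau^*_t(A^*)\otimes{\cal D}$ equals $d-t+1$, which is precisely the number of vectors in \eqref{eq:basis}. Consequently it suffices to prove that $R_t$ together with the span of \eqref{eq:basis} exhausts ${\cal D}\otimes \tau^*_t(A^*)\otimes{\cal D}$; the matching count will then force \eqref{eq:basis} to be linearly independent modulo $R_t$, so that it is a basis for a complement. Write $B$ for the span of \eqref{eq:basis}. By Theorem \ref{thm:basis1} the space ${\cal D}\otimes \tau^*_t(A^*)\otimes{\cal D}$ is spanned by $R_t$ together with the diagonal vectors \eqref{eq:basis1}, namely $E_i\otimes\tau^*_t(A^*)\otimes E_i$ for $0\leq i\leq d-t$. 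So the whole problem reduces to showing that each such diagonal vector lies in $R_t+B$.

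First I would dispose of the case $t=0$: here $\tau^*_0(A^*)=I$ and the set \eqref{eq:basis} coincides with \eqref{eq:basis1}, so the claim is immediate from Theorem \ref{thm:basis1}. For $t\geq 1$ I would prove by induction on $i=0,1,\ldots,d-t$ that
\[
  E_i\otimes\tau^*_t(A^*)\otimes E_i \in R_t + B.
\]
For the inductive step I apply Lemma \ref{eq:tech} with $j=i+t$, which satisfies $i<j\leq i+t\leq d$ exactly because $t\geq 1$ and $i\leq d-t$. The lemma tells us that
\[
  f^t_{i,i+t}(\theta_i)\,E_i\otimes\tau^*_t(A^*)\otimes E_i
  + f^t_{i,i+t}(\theta_{i+t})\,E_i\otimes\tau^*_t(A^*)\otimes E_{i+t}
\]
lies in $R_t+\text{Span}\{E_h\otimes\tau^*_t(A^*)\otimes E_h\mid 0\leq h<i\}$. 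The second summand belongs to $B$ by the definition \eqref{eq:basis}, and by the induction hypothesis the trailing span is contained in $R_t+B$. Since the factors of $f^t_{i,i+t}=\prod_{h=i+1,\,h\neq i+t}^{i+t}(\lambda-\theta_h)$ evaluated at $\theta_i$ involve only distinct eigenvalues, the scalar $f^t_{i,i+t}(\theta_i)$ is nonzero, so I may solve for $E_i\otimes\tau^*_t(A^*)\otimes E_i$ and conclude it lies in $R_t+B$. This completes the induction, whence every diagonal vector \eqref{eq:basis1} lies in $R_t+B$ and therefore $R_t+B={\cal D}\otimes\tau^*_t(A^*)\otimes{\cal D}$.

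The genuinely substantive content — the relation among the diagonal and off-diagonal tensors modulo $R_t$ — has already been isolated in Lemma \ref{eq:tech}, so the present argument is mostly bookkeeping. The one point that requires care, and which I regard as the crux, is the choice $j=i+t$ together with the verification that $f^t_{i,i+t}(\theta_i)\neq 0$: it is precisely this nonvanishing that lets me trade each diagonal vector for the off-diagonal vector $E_i\otimes\tau^*_t(A^*)\otimes E_{i+t}$ appearing in \eqref{eq:basis}. Organizing the induction so that the ``lower'' diagonal terms produced by Lemma \ref{eq:tech} have already been absorbed into $R_t+B$ is the other detail to get right.
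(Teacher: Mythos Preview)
Your argument is correct and follows essentially the same line as the paper's own proof: both invoke Lemma \ref{eq:tech} with $j=i+t$ and the nonvanishing of $f^t_{i,i+t}(\theta_i)$ to exchange the diagonal tensors \eqref{eq:basis1} for the off-diagonal tensors \eqref{eq:basis} modulo $R_t$. The only cosmetic difference is that the paper works in the quotient ${\cal D}\otimes\tau^*_t(A^*)\otimes{\cal D}/R_t$ and phrases the conclusion as ``the change-of-basis matrix from \eqref{eq:basis1} to \eqref{eq:basis} is upper triangular with nonzero diagonal,'' whereas you unwind this into an explicit induction on $i$; your separate treatment of $t=0$ (where the hypothesis $i<j$ of Lemma \ref{eq:tech} is unavailable) is a nice bit of extra care.
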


\begin{proof}
Consider the quotient vector space
\[
   V_t = {\cal D} \otimes \tau^*_t(A^*)  \otimes {\cal D} /R_t.
\] 
We show the vectors
\begin{equation}    \label{eq:auxe1}
  E_i \otimes \tau^*_t(A^*) \otimes E_{i+t} + R_t 
             \qquad\qquad(0 \leq i \leq d-t)
\end{equation}
form a basis for $V_t$. 
By Theorem \ref{thm:basis1} the vectors
\begin{equation}     \label{eq:auxe2}
 E_i \otimes \tau^*_t(A^*) \otimes E_i + R_t  
      \qquad\qquad      (0 \leq i \leq d-t)  
\end{equation}
form a basis for $V_t$. 
Write the elements \eqref{eq:auxe1} in terms of the basis \eqref{eq:auxe2}.
By Lemma \ref{eq:tech} the resulting coefficient  matrix is upper triangular 
with all diagonal entries nonzero. 
Therefore \eqref{eq:auxe1} is a basis for $V_t$ and the result follows.
\end{proof}

\begin{theorem}       \label{thm:basis2}
With reference to Notation {\rm \ref{notation}} and
Definition {\rm \ref{def:R}}
the vectors
\begin{equation}    \label{eq:basis2}
  \{E_i \otimes \tau^*_{j-i}(A^*) \otimes E_j \,|\, 0 \leq i\leq j\leq d \} 
\end{equation}
form a basis for a complement of $R$ in 
${\cal D} \otimes {\cal D}^* \otimes {\cal D}$.
\end{theorem}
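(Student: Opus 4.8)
The plan is to assemble the desired basis from the summand-by-summand complements already constructed, exploiting the fact that the ambient space and $R$ admit \emph{compatible} direct sum decompositions indexed by $t$. First I would recall from Lemma \ref{lem:DDsD} that
\[
 {\cal D} \otimes {\cal D}^* \otimes {\cal D}
   = \bigoplus_{t=0}^d {\cal D} \otimes \tau^*_t(A^*) \otimes {\cal D},
\]
and from Definition \ref{def:Rt} together with Theorem \ref{thm:coincide}(ii) that $R = \bigoplus_{t=0}^d R_t$, where $R_t = R \cap ({\cal D}\otimes \tau^*_t(A^*)\otimes{\cal D})$ lies inside the $t$-th summand. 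The crucial point is that these two decompositions are indexed in lockstep: the $t$-th piece of $R$ sits inside the $t$-th piece of the ambient space.

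Next I would invoke Proposition \ref{prop:basis}, which for each $t$ supplies a complement $C_t$ of $R_t$ inside ${\cal D}\otimes \tau^*_t(A^*)\otimes{\cal D}$ with basis $\{E_i \otimes \tau^*_t(A^*) \otimes E_{i+t} \,|\, 0 \leq i \leq d-t\}$. At this stage I would appeal to the elementary linear-algebra principle that, whenever an ambient space and a subspace decompose compatibly as direct sums $\bigoplus_t X_t$ and $\bigoplus_t Y_t$ with $Y_t \subseteq X_t$, a choice of complement $C_t$ of each $Y_t$ in $X_t$ assembles to a complement $\bigoplus_t C_t$ of $\bigoplus_t Y_t$; moreover the union of bases of the $C_t$ is a basis of $\bigoplus_t C_t$. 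Applied here, $\bigoplus_{t=0}^d C_t$ is a complement of $R$ in ${\cal D}\otimes{\cal D}^*\otimes{\cal D}$, and the union over $t$ of the bases of the $C_t$ is a basis for it.

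Finally I would dispatch the bookkeeping of reindexing. Setting $j = i+t$, the pairs $(i,t)$ with $0 \le t \le d$ and $0 \le i \le d-t$ range bijectively over the pairs $(i,j)$ with $0 \le i \le j \le d$, and the vector $E_i \otimes \tau^*_t(A^*) \otimes E_{i+t}$ becomes $E_i \otimes \tau^*_{j-i}(A^*) \otimes E_j$. Hence the collection \eqref{eq:basis2} is exactly the union over $t$ of the bases furnished by Proposition \ref{prop:basis}, establishing the theorem. The argument is essentially organizational: the substantive content has already been absorbed into Proposition \ref{prop:basis} (and, beneath it, Lemma \ref{eq:tech}), so the only step requiring genuine care is confirming that the decompositions of the ambient space and of $R$ are compatibly indexed, allowing complements to be chosen one summand at a time — and this is precisely what Theorem \ref{thm:coincide}(ii) guarantees.
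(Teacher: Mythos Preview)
Your proposal is correct and follows essentially the same approach as the paper: both argue that the set \eqref{eq:basis2} is the disjoint union over $t$ of the sets \eqref{eq:basis}, and then combine Lemma \ref{lem:DDsD}, Theorem \ref{thm:coincide}(ii), and Proposition \ref{prop:basis} to conclude. You have simply spelled out in more detail the elementary linear-algebra principle that compatible direct-sum decompositions allow complements to be assembled summand by summand, which the paper leaves implicit.
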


\begin{proof}
The set \eqref{eq:basis2} is the disjoint union over $t=0,1,\ldots, d$
of the sets \eqref{eq:basis}. 
The result follows in view of
Lemma \ref{lem:DDsD},
Theorem \ref{thm:coincide}(ii), and Proposition \ref{prop:basis}
\end{proof}

\section{The space ${\cal D} \otimes E^*_0 \otimes {\cal D}$}

\indent
With reference to Notation \ref{notation} and Definition \ref{def:R},
in this section we show that the elements 
$\{E_i \otimes E^*_0 \otimes E_j \,|\, 0 \leq i \leq j \leq d\}$
form a basis for a complement of $R$ in 
${\cal D} \otimes {\cal D}^* \otimes {\cal D}$.
We will use the following lemma.

\medskip

\begin{lemma}    \label{lem:new}    \samepage
With reference to Notation {\rm \ref{notation}}, for $0 \leq t \leq d$
and $0 \leq i \leq d-t$ the element
\[
  E_i \otimes \tau^*_t(A^*) \otimes E_{i+t} 
     -  (\th^*_0-\th^*_1)(\th^*_0-\th^*_2) \cdots (\th^*_0-\th^*_t)
                          E_i \otimes E^*_0 \otimes E_{i+t} 
\]
is contained in
\begin{equation}   \label{eq:auxf1}
    R  +  \sum_{n=t+1}^d  {\cal D} \otimes \tau^*_n(A^*) \otimes {\cal D}.
\end{equation}
\end{lemma}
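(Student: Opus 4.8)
The plan is to expand $E^*_0$ in terms of the elements $\tau^*_n(A^*)$, $0 \le n \le d$, tensor on the left by $E_i$ and on the right by $E_{i+t}$, and then sort the resulting terms using the direct sum of Lemma~\ref{lem:DDsD}. Applying the right-hand equation of \eqref{eq:tauiA} (Lemma~\ref{lem:tauiAEi}) to $\Phi^*$ with $i=0$, and using $\tau^*_0=1$, produces scalars $b_n = \eta^*_{d-n}(\th^*_0)/\eta^*_d(\th^*_0)$ with
\[
  E_i \otimes E^*_0 \otimes E_{i+t}
    = \sum_{n=0}^d b_n \, E_i \otimes \tau^*_n(A^*) \otimes E_{i+t}.
\]
I would then show that the single $n=t$ term is, up to the scalar $b_t$, the desired vector, while every other term lies in the space \eqref{eq:auxf1}.

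The heart of the argument is the evaluation of $b_t$. From Definition~\ref{def:tau} one has $\eta^*_d(\th^*_0) = \prod_{s=1}^d (\th^*_0 - \th^*_s)$ and $\eta^*_{d-t}(\th^*_0) = \prod_{s=t+1}^d (\th^*_0 - \th^*_s)$, so the quotient telescopes to
\[
  b_t = \frac{\eta^*_{d-t}(\th^*_0)}{\eta^*_d(\th^*_0)}
      = \frac{1}{(\th^*_0-\th^*_1)(\th^*_0-\th^*_2)\cdots(\th^*_0-\th^*_t)}.
\]
This is precisely the reciprocal of the scalar $c_t=(\th^*_0-\th^*_1)\cdots(\th^*_0-\th^*_t)$ appearing in the statement, and it is nonzero because the $\th^*_s$ are mutually distinct. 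Matching this constant is the one place where care is genuinely needed; everything else is bookkeeping.

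It then remains to locate the off-diagonal terms. For $n > t$ the term $E_i \otimes \tau^*_n(A^*) \otimes E_{i+t}$ lies in ${\cal D} \otimes \tau^*_n(A^*) \otimes {\cal D}$, hence in $\sum_{n=t+1}^d {\cal D} \otimes \tau^*_n(A^*) \otimes {\cal D}$. For $n < t$ we have $n < t = |i-(i+t)|$, so $E_i \otimes \tau^*_n(A^*) \otimes E_{i+t}$ belongs to the spanning set \eqref{eq:S3} of $R_n \subseteq R$; here the hypotheses $0 \le i \le d-t$ ensure the indices $i$ and $i+t$ lie in $\{0,1,\dots,d\}$. Consequently, modulo the space \eqref{eq:auxf1}, the sum collapses to $b_t\, E_i \otimes \tau^*_t(A^*) \otimes E_{i+t}$. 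Multiplying by $c_t = b_t^{-1}$ turns $c_t\, E_i \otimes E^*_0 \otimes E_{i+t}$ into $E_i \otimes \tau^*_t(A^*) \otimes E_{i+t}$ modulo \eqref{eq:auxf1}, which is exactly the asserted membership. The main obstacle is thus not conceptual but the coefficient computation already carried out above; once $c_t b_t = 1$ is verified, the separation of the remaining terms into $R$ and the higher sum is routine.
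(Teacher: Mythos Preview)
Your proof is correct and follows essentially the same approach as the paper: expand $E^*_0$ via the right-hand identity in \eqref{eq:tauiA} applied to $\Phi^*$, tensor by $E_i$ and $E_{i+t}$, then place the $n<t$ summands in $R$ via \eqref{eq:S3}/\eqref{eq:Mt} and the $n>t$ summands in the higher sum, leaving the $n=t$ term with coefficient $\eta^*_{d-t}(\th^*_0)/\eta^*_d(\th^*_0)$. The paper's argument is identical in structure and in the key coefficient computation.
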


\begin{proof}
Applying the equation on the right in \eqref{eq:tauiA} to $\Phi^*$,
\[
 E^*_0 =  \sum_{n=0}^d 
      \frac{\eta^*_{d-n}(\th^*_0) \tau^*_n(A^*)}{\eta^*_d(\th^*_0)}.
\]
In this equation we tensor each term on the left by $E_i$
and on the right by $E_{i+t}$ to get
\begin{equation}    \label{eq:auxf2}
 E_i E^*_0 E_{i+t} =  \sum_{n=0}^d 
      \frac{\eta^*_{d-n}(\th^*_0)}{\eta^*_d(\th^*_0)}
               E_i \otimes \tau^*_n(A^*) \otimes E_{i+t}.
\end{equation}
For $0 \leq n \leq d$ consider the $n$-summand on the right in \eqref{eq:auxf2}.
For $0 \leq n \leq t-1$ the $n$-summand is in $R$ by \eqref{eq:S3}. 
For $t +1 \leq n \leq d$ the $n$-summand is in \eqref{eq:auxf1} by construction.
The result follows from these comments and since
\[
 \eta^*_d(\th^*_0) = 
  (\th^*_0-\th^*_1)(\th^*_0-\th^*_2) \cdots (\th^*_0-\th^*_t) 
   \eta^*_{d-t}(\th^*_0).
\]
\end{proof}

\medskip

\begin{theorem}             \label{thm:basis3}
With reference to Notation {\rm \ref{notation}} and
Definition {\rm \ref{def:R}} the vectors
\begin{equation}    \label{eq:basis4}
  \{E_i \otimes E^*_0 \otimes E_j \,|\, 0 \leq i\leq j\leq d\}
\end{equation}
form a basis for a complement of $R$ in 
${\cal D} \otimes {\cal D}^* \otimes {\cal D}$.
\end{theorem}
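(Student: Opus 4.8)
The plan is to deduce Theorem~\ref{thm:basis3} from Lemma~\ref{lem:new} by a triangularity argument organized along the grading of ${\cal D}\otimes{\cal D}^*\otimes{\cal D}$ by the degree of the middle tensor factor. First I would count cardinalities: the set \eqref{eq:basis4} has exactly $(d+1)(d+2)/2$ elements (the number of pairs $0\le i\le j\le d$), which by Corollary~\ref{cor:dim}(iv) equals the codimension of $R$ in ${\cal D}\otimes{\cal D}^*\otimes{\cal D}$. Consequently it suffices to prove that \eqref{eq:basis4} together with $R$ spans ${\cal D}\otimes{\cal D}^*\otimes{\cal D}$; a spanning family of size equal to the codimension is then automatically a basis for a complement of $R$.

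To organize the spanning argument I would introduce the decreasing filtration
\[
 G_s = R + \sum_{n=s}^d {\cal D}\otimes\tau^*_n(A^*)\otimes{\cal D} \qquad (0\le s\le d), \qquad G_{d+1}=R.
\]
By Lemma~\ref{lem:DDsD} and Theorem~\ref{thm:coincide}(ii) one has $G_s = \bigoplus_{n<s}R_n \oplus \bigoplus_{n\ge s}{\cal D}\otimes\tau^*_n(A^*)\otimes{\cal D}$, so the quotient $G_s/G_{s+1}$ is naturally identified with ${\cal D}\otimes\tau^*_s(A^*)\otimes{\cal D}/R_s$, and by Proposition~\ref{prop:basis} the vectors $E_i\otimes\tau^*_s(A^*)\otimes E_{i+s}$ $(0\le i\le d-s)$ descend to a basis of this quotient. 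The crucial input, supplied by Lemma~\ref{lem:new}, is that for $0\le i\le d-s$ the new vector $E_i\otimes E^*_0\otimes E_{i+s}$ satisfies $E_i\otimes\tau^*_s(A^*)\otimes E_{i+s}\equiv c_s\,E_i\otimes E^*_0\otimes E_{i+s}\pmod{G_{s+1}}$, where $c_s=(\th^*_0-\th^*_1)(\th^*_0-\th^*_2)\cdots(\th^*_0-\th^*_s)$ is nonzero since the $\{\th^*_i\}_{i=0}^d$ are mutually distinct. Thus, modulo $G_{s+1}$, each $E_i\otimes E^*_0\otimes E_{i+s}$ is a nonzero scalar multiple of the Proposition~\ref{prop:basis} basis vector of $G_s/G_{s+1}$.

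From here I would run a downward induction on $s$ to show that $\mathrm{Span}\{E_i\otimes E^*_0\otimes E_{i+t}\mid s\le t\le d,\ 0\le i\le d-t\}+R=G_s$. The base case $s=d$ is immediate from Lemma~\ref{lem:new} (the error sum in its statement is then empty). For the inductive step one writes $G_s = G_{s+1}+\mathrm{Span}\{E_i\otimes\tau^*_s(A^*)\otimes E_{i+s}\mid 0\le i\le d-s\}$, using Proposition~\ref{prop:basis} together with $R_s\subseteq R\subseteq G_{s+1}$, and then replaces each $E_i\otimes\tau^*_s(A^*)\otimes E_{i+s}$ by $c_s\,E_i\otimes E^*_0\otimes E_{i+s}$ modulo $G_{s+1}$ via Lemma~\ref{lem:new}, invoking the induction hypothesis for $G_{s+1}$. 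Taking $s=0$ gives $G_0={\cal D}\otimes{\cal D}^*\otimes{\cal D}$ spanned by \eqref{eq:basis4} together with $R$, and the dimension count finishes the proof. I expect essentially no hard computation here: the entire content lives in Lemma~\ref{lem:new}. The one point requiring care is the \emph{direction} of the filtration, since the error terms in Lemma~\ref{lem:new} sit in strictly higher middle-degree $n>s$; the induction must therefore run downward from $s=d$ so that these higher-degree contributions are already absorbed into $G_{s+1}$ at each stage. Getting this ordering right, rather than any delicate estimate, is the crux.
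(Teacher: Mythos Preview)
Your proposal is correct and follows essentially the same approach as the paper: both reduce to a spanning argument via the codimension count of Corollary~\ref{cor:dim}(iv), and both run a downward induction on the middle-degree index using Proposition~\ref{prop:basis} together with Lemma~\ref{lem:new}. Your filtration $G_s$ makes the structure slightly more explicit, but the paper's proof is the same argument phrased in terms of a single span $S$ and the inclusion ${\cal D}\otimes\tau^*_t(A^*)\otimes{\cal D}\subseteq S$ by induction on $t=d,d-1,\ldots,0$.
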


\begin{proof}
The cardinality of the set \eqref{eq:basis4} is $(d+1)(d+2)/2$,
and by Corollary \ref{cor:dim}(iv) this is the codimension of 
$R$ in ${\cal D} \otimes {\cal D}^* \otimes {\cal D}$.
Therefore, it suffices to show that $R$ and the elements \eqref{eq:basis4} 
together span ${\cal D} \otimes {\cal D}^* \otimes {\cal D}$.
Let $S$ denote the subspace of ${\cal D} \otimes {\cal D}^* \otimes {\cal D}$ 
spanned by $R$ and the elements \eqref{eq:basis4}. 
To show that $S= {\cal D} \otimes {\cal D}^* \otimes {\cal D}$
we show 
  ${\cal D} \otimes \tau^*_t(A^*) \otimes {\cal D}  \subseteq S$
for $0 \leq t \leq d$. 
We show this by induction on $t=d,d-1, \ldots, 0$.
Let $t$ be given. 
By Proposition \ref{prop:basis},
\[ 
 {\cal D} \otimes \tau^*_t(A^*) \otimes {\cal D}
  =  R_t + 
   \text{Span}\{E_i \otimes \tau^*_t(A^*) \otimes E_{i+t}\,|\,0 \leq i \leq d-t\}.
\]
By construction $R_t \subseteq R \subseteq S$. 
For $0 \leq i \leq d-t$ we have
 $E_i \otimes \tau^*_t(A^*) \otimes E_{i+t} \in S$
by Lemma \ref{lem:new} and induction on $t$. 
By these comments
 ${\cal D} \otimes \tau^*_t(A^*) \otimes {\cal D} \subseteq S$ 
and the result follows.
\end{proof}

\section{The proof of Theorem \ref{thm:main}}

\indent
Using the results in earlier sections we can now easily
prove Theorem \ref{thm:main}.

\medskip

\begin{proofof}{Theorem \ref{thm:main}}
(i): 
By Definition \ref{def:pi} the image
$\pi({\cal D} \otimes {\cal D}^* \otimes {\cal D})$ is the span of
$E^*_0{\cal D}{\cal D}^*{\cal D}E^*_0$.
Similarly the image 
$\pi({\cal D} \otimes E^*_0 \otimes {\cal D})$ is the span of
$E^*_0{\cal D}E^*_0{\cal D}E^*_0$. We show
\begin{equation}    \label{eq:auxc}
 \pi({\cal D} \otimes {\cal D}^* \otimes {\cal D})
  = \pi({\cal D} \otimes E^*_0 \otimes {\cal D}). 
\end{equation}
Let $C$ denote the subspace of 
${\cal D} \otimes {\cal D}^* \otimes {\cal D}$ spanned by the 
elements \eqref{eq:basis4}.
By Theorem 9.3 
${\cal D} \otimes {\cal D}^* \otimes {\cal D}$ is the direct sum $C + R$.
By the construction $C$ is contained in 
${\cal D} \otimes E^*_0 \otimes {\cal D}$.
By Lemma \ref{lem:kernel} the space $R$ is contained in the kernel of $\pi$. 
Therefore
\[
  {\cal D} \otimes {\cal D}^* \otimes {\cal D}
  = {\cal D} \otimes E^*_0 \otimes {\cal D} + \text{Ker}(\pi).
\]
Applying $\pi$ to this equation we get \eqref{eq:auxc} and the result follows.

(ii):
For $X, Y \in {\cal D}$ we show
$E^*_0XE^*_0$, $E^*_0YE^*_0$ commute.
By Theorem \ref{thm:dd}(ii),
\[
  X \otimes E^*_0 \otimes Y  - Y \otimes E^*_0 \otimes X \in R.
\]
In the above line we apply the map $\pi$
and use Lemma \ref{lem:kernel} to find
\[
    E^*_0 X E^*_0 Y E^*_0  = E^*_0 Y  E^*_0  X E^*_0.
\]
By this and since $E^{*2}_0=E^*_0$ the elements $E^*_0XE^*_0$,  
$E^*_0YE^*_0$  commute.
\end{proofof}

\section{Acknowledgements}

\indent
The second author gratefully acknowledges many conversations with 
Tatsuro Ito (Ka\-na\-za\-wa University) on the general subject of this paper. 
The resulting insights lead directly to the paper, and consequently
we feel that Ito deserves to be a coauthor.
We offered him this coauthorship but he declined.

\bigskip

{\small

\bibliographystyle{plain}

}

\bigskip\bigskip\noindent
Kazumasa Nomura\\
College of Liberal Arts and Sciences\\
Tokyo Medical and Dental University\\
Kohnodai, Ichikawa, 272-0827 Japan\\
email: knomura@pop11.odn.ne.jp

\bigskip\noindent
Paul Terwilliger\\
Department of Mathematics\\
University of Wisconsin\\
480 Lincoln Drive\\ 
Madison, Wisconsin, 53706 USA\\
email: terwilli@math.wisc.edu

\bigskip\noindent
{\bf Keywords.}
Leonard pair, tridiagonal pair, $q$-Racah polynomial, orthogonal polynomial.

\noindent
{\bf 2000 Mathematics Subject Classification}.
05E35, 05E30, 33C45, 33D45.

\end{document}